\newtheorem{thm}{Theorem}[section]
\newtheorem{prop}[thm]{Proposition}
\theoremstyle{remark}\newtheorem{remark}{Remark} }
\newtheorem{theorem}{Theorem}
\newtheorem{definition}{Definition}
\newtheorem{lemma}{Lemma}
\newcommand{\hZ}{\hat{\mathbb{Z}} }
\def \d{\mbox{\(\,\mathrm{d}\)}}
\begin{document}

\title[Short version of title]{Transition of type in the von Neumann algebras associated to the Connes-Marcolli $GSp_4$-system }
% or if you want, simply \title{Title of the article}
\author{Ismail Abouamal}
\email{abouamal@caltech.edu}
\maketitle

\begin{abstract}
We study different types of von Neumann algebras arising from the Connes-Marcolli $GSp_4$-system and show that a phase transition occurs at the level of these algebras. More precisely, we show that the type of these algebras transitions from type $\textmd{I}_\infty$ to type $\textmd{III}_1$, with this transition occurring precisely at the inverse temperature $\beta = 4$.

\end{abstract}

\section{Introduction}

\par In our previous work \cite{abouamal2022bost}, we studied the structure of all extremal $\textmd{KMS}_\beta$ states on the Connes-Marcolli $GSp_4$-system and established that a phase transition occurs at the critical inverse temperatures $\beta_{c_1} = 3$ and $\beta_{c_2} = 4$. More specifically, we showed that for $\beta >4$, every extremal $\textmd{KMS}_\beta$ state is a Gibbs state and the partition function can be expressed as the ratio of shifted Riemann zeta functions. In the range $3 < \beta \leq 4$, we proved that there exists a unique $\textmd{KMS}_\beta$ state and explicitly constructed its corresponding $\mu_\beta$-measure on the space $PGSp_{4}^+(\mathbb R) \times MSp_{4}(\mathbb A_{\mathbb Q, f})$.

\par In this paper, our focus shifts to investigating the structure of all von Neumann algebras generated by the extremal $\textmd{KMS}_\beta$ states for a given inverse temperature $\beta > 3$. In section \ref{low_temperature}, we show that the equilibrium states generate a type $\textmd{I}_{\infty}$ factor when $\beta > 4$. In section \ref{high_temperature}, we present a proof of our main result (Theorem \ref{main_theorem}) which establishes that the unique $\textmd{KMS}_\beta$ for $3 <\beta \leq 4$ is of type $\text{III}_{1}$. This amounts to proving that the action of $GSp_{4}^+(\mathbb Q)$ on $PGSp_{4}^+(\mathbb R)\times MSp_{4}(\mathbb A_{\mathbb Q,f})$ 
is of type $\textmd{III}_1$ (c.f. Definition \ref{action_typeIII}) with respect to the product measure corresponding to the unique $\textmd{KMS}_\beta$ state (c.f. \cite[Proposition 3.10.]{abouamal2022bost} for the explicit description of the product measure). The proof relies on two preliminary results. The first is the ergodicity of the action of $GSp_{4}^+(\mathbb Q)$ on $MSp_{4}(\mathbb A_{\mathbb Q,f} )$, which was established in \cite[Theorem 3.13]{abouamal2022bost}. The second component of the proof consists of showing that the action of $GSp_{4}^+(\mathbb{Q})$ on the space 
$PGSp_{4}^+(\mathbb R) \times MSp_4(\mathbb{A}_f) / GSp_{4}(\hZ)$ is of type $\textmd{III}_1$, which we prove by explicitly computing the ratio set.

We first recall some notations from \cite{abouamal2022bost}. The set of prime numbers is denoted by $\mathcal{P}$. For a given nonempty finite set of prime numbers $F \subset \mathcal{P}$, we denote by $\mathbb N (F)$ the unital multiplicative subsemigroup of $\mathbb N$ generated by $p\in F$. We denote by $A_{\mathbb Q,f}$ the ring
of finite ad\`eles of $\mathbb Q$ and set
\begin{comment}
\begin{figure}
\begin{center}

\begin{tikzpicture}
    % Draw the horizontal line
    \draw (0,0) -- (6,0);
    
    % Add numbers
    \node at (0,0.3) {0};
    \node at (3,0.3) {$\beta_{c_1}$};
    \node at (4,0.3) {$\beta_{c_2}$};
    \node at (6,0.3) {$+\infty$};

    % Draw accolades
    \draw[thick] (0,-0.05) -- (0,+0.05);
    \draw[thick] (3,-0.05) -- (3,+0.05);
    \draw[thick] (4,-0.05) -- (4,+0.05);

\draw [decorate, 
decoration = {calligraphic brace, raise = 2pt, amplitude = 4pt, mirror,  line width=4pt}] (0.1,0) --  (2.9,0);

\draw [decorate, 
decoration = {calligraphic brace, raise = 2pt, amplitude = 4pt, mirror}] (3.1,0) --  (3.9,0);

\draw [decorate, 
decoration = {calligraphic brace, raise = 2pt, amplitude = 4pt, mirror}] (4.1,0) --  (6,0);

\end{tikzpicture}
\end{center}

    \includegraphics{}
    \caption{Phase transition in the $GSp_4$ system}
    \label{fig:enter-label}
\end{figure}
    
\end{comment}

\begin{align*}
    G &= GSp_4^+(\mathbb Q), & X &= PGSp_{4}^+ (\mathbb R) \times MSp_4(\mathbb{A}_{\mathbb {Q},f}),\\
    \Gamma &= Sp_4(\mathbb Z), & Y &= PGSp_{4}^+ (\mathbb R) \times MSp_{4}(\hZ)\subset X.
\end{align*}

The $C^*$-dynamical system we aim to study is denoted by $(\mathcal{A}, \sigma_t)$ where $\mathcal{A}$ is the completion of the algebra $C_c (\Gamma_2 \backslash G \boxtimes_{\Gamma_{2}} Y)$ in the reduced norm and the time evolution is given by

$$ \sigma_t(f)(g,y) = \lambda(g)^{it} f(g,y), \quad f\in C_c (\Gamma_2 \backslash G \boxtimes_{\Gamma_{2}} Y).$$

For a finite set of primes $F\subset \mathcal{P}$, we put 

$$ \mathbb Q_F = \prod_{p\in F} \mathbb{Q}_p, \quad 
    \mathbb Z_F = \prod_{p\in F} \mathbb Z_p,$$ and 

\begin{align*}
    X_F &= PGSp_{4}^+(\mathbb R) \times MSp_4 (\mathbb{Q}_F), \quad 
    Y_F = PGSp_{4}^+(\mathbb R) \times MSp_4 (\mathbb{Z}_F).
\end{align*}

Given a prime $p \in \mathcal{P}$ we have that

\begin{equation*}
    \{g\in MSp_{4}(\mathbb Z): \abs{\lambda(g)}=p\}=\Gamma_2 g_{1,p} \Gamma_2,
\end{equation*}
and $$\deg_{\Gamma_2}(g_{1,p})=(1+p)(1+p^2),$$

where $g_{1,p} = \textmd{diag}(1,1,p,p)$.
 We set
\begin{align}
    A_p & :=\{(\tau,x)\in PGSp_4^+(\mathbb R) \times MSp_{4}(\hZ)\, | \, x_p \in GSp_{4}(\mathbb Z_p)\},\label{label 108}\\
    B_p & :=\{(\tau,x) \in PGSp^+_{4}(\mathbb R) \times MSp_4(\hZ) \, | \, \abs{\lambda(x)}_p=p^{-1}\}. \label{label 109}
\end{align}

Denote by $\pi_{F}$ the factor map $X \rightarrow X_F$ and let $f$ be a function on $X_F$. We then define the function $f_F$ on $X$ by

\begin{equation*}
    f_F(x)= \begin{cases}
    f(\pi_{F}(x)) \quad &\text{if}\,\, x_p \in MSp_{4}(\mathbb Z_p) \,\, \text{for all}\,\, p \in F^{c}, \\
    0 \quad \,\, &\text{otherwise.}
    \end{cases}
\end{equation*}

\subsection*{Acknowledgment}
The author would like to thank his advisor Matilde Marcolli for her guidance throughout this project.

\section{Low temperature region: Type $\textmd{I}_{\infty}$ factors and Gibbs states} \label{low_temperature}

In the low temperature regime, the set of $\textmd{KMS}_\beta$ states on $(\mathcal{A}, \sigma_t)$ is parametrized by points on the space $PGSp_{4}^+(\mathbb R) \times GSp_{4}(\hZ)$. Recall from \cite{abouamal2022bost} that if $\beta > 4$, then every extremal $\textmd{KMS}_\beta$ state $\phi_\beta$ is a Gibbs state. We will now show that these states generate a family of type $I_\infty$ factors. Recall that for any $\textmd{KMS}_\beta$ state $\phi$ on the system $(\mathcal{A}, \sigma_t)$, its type corresponds to the type of the von Neumann algebra $\pi_{\phi}(\mathcal{A})''$ generated in the \textmd{GNS} representation.\\

\begin{theorem}
Let $y\in PGSp_{4}^+(\mathbb R) \times GSp_{4}(\hZ) $ and $\beta>4$. Then the $\textmd{KMS}_\beta$ state given by

\begin{equation*}
\phi_{\beta,y}(f)= \frac{\zeta(2\beta-2)\textmd{Tr} (\pi_{y}(f)e^{-\beta H_y})}{\zeta(\beta) \zeta(\beta-1)\zeta(\beta-2) \zeta(\beta-3)},\quad \forall f\in \mathcal{A}
\end{equation*}
is extremal of type $\textmd{I}_{\infty}$.
\end{theorem}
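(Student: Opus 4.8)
The plan is to realize $\phi_{\beta,y}$ as a Gibbs state for a concrete positive-energy representation of $\mathcal{A}$ and then reduce the assertion to the irreducibility of that representation. First I would recall from \cite{abouamal2022bost} the representation $\pi_y$ of $\mathcal{A}$ on the Hilbert space $\mathcal{H}_y=\ell^2(\Gamma_2\backslash G\cdot y)$ spanned by the orbit of $y$ under the Hecke correspondence, together with the Hamiltonian $H_y$ implementing the dynamics, $\pi_y(\sigma_t(f))=e^{itH_y}\pi_y(f)e^{-itH_y}$. Two features of this data are what I need: the spectrum of $H_y$ is discrete and bounded below, and $e^{-\beta H_y}$ is of trace class precisely for $\beta>4$, with
\[
\operatorname{Tr}\bigl(e^{-\beta H_y}\bigr)=\frac{\zeta(\beta)\zeta(\beta-1)\zeta(\beta-2)\zeta(\beta-3)}{\zeta(2\beta-2)},
\]
this being exactly the partition function computed in \cite{abouamal2022bost}; and $\mathcal{H}_y$ is countably infinite dimensional, because $\deg_{\Gamma_2}(g_{1,p})=(1+p)(1+p^2)>1$ forces the orbit $\Gamma_2\backslash G\cdot y$ to be infinite. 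Setting $\rho_\beta:=e^{-\beta H_y}/\operatorname{Tr}(e^{-\beta H_y})$, a positive trace-class operator of full support, the formula in the statement reads $\phi_{\beta,y}(f)=\operatorname{Tr}(\pi_y(f)\rho_\beta)$; that is, $\phi_{\beta,y}$ is the Gibbs state attached to $(\pi_y,H_y)$.

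Next I would use the standard fact that the GNS representation of such a Gibbs state is quasi-equivalent to the underlying positive-energy representation $\pi_y$: concretely, it is the restriction of $\bigl(\mathcal{H}_y\otimes\overline{\mathcal{H}_y},\,\pi_y\otimes 1,\,\rho_\beta^{1/2}\bigr)$ — with $\rho_\beta^{1/2}$ viewed as a Hilbert--Schmidt vector and $\pi_y\otimes 1$ acting by left multiplication — to the cyclic subspace generated by $\rho_\beta^{1/2}$, and since $\rho_\beta^{1/2}$ has full support the natural map identifies $\pi_{\phi_{\beta,y}}(\mathcal{A})''$ with $\pi_y(\mathcal{A})''$. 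Hence $\phi_{\beta,y}$ is extremal if and only if $\pi_y(\mathcal{A})''$ is a factor, and it is of type $\textmd{I}_\infty$ if and only if $\pi_y(\mathcal{A})''=B(\mathcal{H}_y)$ (using $\dim\mathcal{H}_y=\infty$). The whole statement thus reduces to proving that $\pi_y$ is an irreducible representation.

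For the irreducibility I would show that $\pi_y(\mathcal{A})''$ contains every rank-one operator relative to the orthonormal basis $(\delta_z)_z$ of $\mathcal{H}_y$ indexed by the orbit. The algebra $\mathcal{A}$ contains, on the one hand, the functions supported on the unit space of the groupoid $\Gamma_2\backslash G\boxtimes_{\Gamma_2}Y$, which act on $\mathcal{H}_y$ as multiplication operators; because $y$ lies in the invertible locus $PGSp_4^+(\mathbb R)\times GSp_4(\hZ)$ — i.e.\ $x_p\in GSp_4(\mathbb Z_p)$ at every prime — these functions separate the points of the orbit, so their weak closure contains each rank-one projection $\ket{\delta_z}\bra{\delta_z}$. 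On the other hand, $\mathcal{A}$ contains the partial isometries implementing the Hecke operators $g_{1,p}=\operatorname{diag}(1,1,p,p)$, and by the structure of the double cosets $\Gamma_2 g_{1,p}\Gamma_2$ suitable words in these isometries and their adjoints carry $\delta_y$ to $\delta_z$ for every $z$ in the orbit, because $G$ acts transitively on $\Gamma_2\backslash G\cdot y$ (compatibly with the ergodicity of the action of $G$ on $MSp_4(\mathbb A_{\mathbb Q,f})$ established in \cite[Theorem 3.13]{abouamal2022bost}). Conjugating the projections $\ket{\delta_z}\bra{\delta_z}$ by these words yields every $\ket{\delta_z}\bra{\delta_{z'}}$, so $\pi_y(\mathcal{A})''=B(\mathcal{H}_y)$, which finishes the proof.

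I expect the main obstacle to lie in the last step, and within it in the assertion that the partial isometries attached to $\Gamma_2 g_{1,p}\Gamma_2$ have range projections large enough to link all basis vectors of $\mathcal{H}_y$. Making this rigorous requires an explicit description of the groupoid $\Gamma_2\backslash G\boxtimes_{\Gamma_2}Y$ and of the orbit of an invertible point $y$, which is precisely where the $GSp_4$ double-coset combinatorics and the degree identity $(1+p)(1+p^2)$ are used.
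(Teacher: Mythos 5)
Your proposal is correct in substance, and its first half is essentially the paper's argument: the paper also identifies the GNS triple of $\phi_{\beta,y}$ with $(\mathcal{H}_y\otimes\mathcal{H}_y,\ \pi_y\otimes \mathrm{id},\ \Omega_{\beta,y})$, where $\Omega_{\beta,y}=\zeta_{MSp_4(\mathbb Z),\Gamma_2}(\beta)^{-1/2}\sum_h\lambda(h)^{-\beta/2}\,\delta_{\Gamma_2 h}\otimes\delta_{\Gamma_2 h}$ is exactly your Hilbert--Schmidt vector $\rho_\beta^{1/2}$, and then deduces type $\mathrm{I}_\infty$ and extremality from irreducibility of $\pi_y$ via $\tilde{\pi}_y(\mathcal{A})''=(\pi_y(\mathcal{A})'\otimes B(\mathcal{H}_y))'=B(\mathcal{H}_y)\otimes\mathbb{C}$. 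Where you genuinely diverge is the proof of irreducibility. The paper does not argue by hand: it invokes Connes's groupoid result \cite[Proposition VII.5 b)]{connes1979theorie}, by which the commutant of $\pi_y(\mathcal{A})$ is generated by the right regular representation of the isotropy group $\mathcal{G}_y^y$, and this isotropy is trivial because $y$ lies in $PGSp_4^+(\mathbb R)\times GSp_4(\hZ)$; hence $\pi_y(\mathcal{A})'=\mathbb{C}$ in one stroke. Your matrix-unit argument is a more elementary, self-contained substitute, but two remarks are in order. First, the step ``multiplication operators separate the points of the orbit'' is precisely where the triviality of isotropy (freeness at invertible points) enters: one needs $\Gamma_2 h\mapsto\Gamma_2 hy$ to be injective, which holds exactly because the finite part of $y$ is invertible; given a separating family of diagonal operators on $\ell^2$ of a countable set, its weak closure is the full diagonal masa, so the rank-one projections do appear. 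Second, the part you flag as the main obstacle --- reaching every basis vector by words in the $g_{1,p}$ alone --- is an unnecessary restriction: $\mathcal{A}$ is the completion of $C_c(\Gamma_2\backslash G\boxtimes_{\Gamma_2}Y)$, so for any two orbit points $\Gamma_2 h_1,\Gamma_2 h_2$ one may simply take a function supported near the single groupoid element $(h_1h_2^{-1},h_2y)$ and compress by the diagonal projections already obtained; transitivity is then automatic and no $GSp_4$ double-coset combinatorics (nor the degree formula $(1+p)(1+p^2)$) is needed for this theorem. With that simplification your route closes the acknowledged gap and gives the same conclusion; what the paper's citation of Connes buys is brevity, while your argument buys independence from the groupoid-commutant theorem.
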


\begin{proof}

It is enough to show that the the algebra $\mathcal{A}$ associated to the Connes-Marcolli \textmd{GSp$_4$}-system generates a factor in the \textmd{GNS} representation of the state $\phi_{\beta,y}$. Consider the following representation of $\mathcal{A}$:

\begin{align*}
    \tilde{\pi}_y: \mathcal{A} &\longrightarrow \mathcal{B}(\mathcal{H}_y \otimes \mathcal{H}_y)\\
    a &\mapsto \pi_y(a) \otimes id_{\mathcal{H}_y}
\end{align*}

and denote by $ \Omega_{\beta,y}$ the unitary vector given by

\begin{equation*}
    \Omega_{\beta,y}= \zeta_{MSp_{4}(\mathbb Z),\Gamma_{_2}}(\beta)^{-1/2} \sum_{h\in \Gamma_{_2} \backslash G_y} \lambda (h)^{-\beta /2} \delta_{\Gamma_{_2} h} \otimes \delta_{\Gamma_{_2} h},
\end{equation*}

A direct computation shows that

\begin{equation*}
    \phi_{\beta,y}= \langle \tilde{\pi}_y(f) \Omega_{\beta,y},\Omega_{\beta,y} \rangle,\quad \forall f\in \mathcal{A}.
\end{equation*}
and 

\begin{equation*}
\tilde{\pi}_y(f) \Omega_{\beta,y} = \zeta_{MSp_{4}(\mathbb Z),\Gamma_2}(\beta)^{-1/2} \sum_{g,h\in \Gamma_2 \backslash G_y} \lambda (h)^{-\beta /2} f(gh^{-1},hy)\delta_{\Gamma_{_2} g} \otimes \delta_{\Gamma_{_2} h}.
\end{equation*}
By choosing $f$ with a sufficiently small support, we see that the orbit $\tilde{\pi}_y(\mathcal{A})\Omega_{\beta,y}$ is dense in $\mathcal{H}_y \otimes \mathcal{H}_y$. This shows that the $\textmd{GNS}$ representation is equivalent to the triple $(\mathcal{H}_y \otimes \mathcal{H}_y, \tilde{\pi}_y, \Omega_{\beta,y})$.

By \cite[Proposition VII.5 b)]{connes1979theorie} the commutant of $\pi_{y}(\mathcal{A})$ is generated by the right regular representation of the isotropy group $\mathcal{G}_{y}^{y}$ of the groupoid $\mathcal{G}= \Gamma_2 \backslash (G \boxtimes Y$). Since $y\in PGSp_{4}^+(\mathbb R) \times GSp_{4}(\hZ)$, the isotropy group $\mathcal{G}_{y}^{y}$ is trivial which implies that $ \pi_{y}(\mathcal{A})'=\mathbb C$. Hence

\begin{align*}
    \tilde{\pi}_y(\mathcal{A})''
    &=(\pi_{y}(\mathcal{A})' \otimes  B(\mathcal H_y))'\\
    &= B(\mathcal{H}_y) \otimes \mathbb C\\
    & \simeq B(\mathcal{H}_y)
\end{align*}

This shows that $\phi_{\beta,y}$ is an extremal state of type $I_{\infty}$.
\end{proof}

\section{Type $\textmd{III}_1$ factor state: the critical region $3<\beta \leq 4.$} \label{high_temperature}

\par Our next goal is to study the factor generated by the unique $\textmd{KMS}_\beta$ state on the $GSp_4$-system in the critical region $3<\beta \leq 4$. For $\beta > 4$, it was possible to compute the type of any Gibbs state by exhibiting an explicit formula for the $\text{GNS}$ representation (which is unique up to unitary equivalence). The approach in the critical region is less explicit. In fact, we will use a different strategy by extending the approach in \cite{bost1995hecke} and \cite{neshveyev2011neumann}. 

\par Consider now the unique \textmd{KMS} state $\phi_\beta$  on the $GSp_{4}$-system and denote by $\mu_\beta$ the corresponding $\Gamma_2$-invariant measure on $X$. We choose a $\mu_\beta$-measurable fundamental domain $F$ for the action of $\Gamma_2$ on $Y$. Then (See \cite{feldman1977ergodic} and \cite[Remark 2.3]{laca2007phase} ) the algebra $\pi_{\phi_\beta}(\mathcal{A})''$ induced by the state $\phi_\beta$ is isomorphic to the reduction of the von Neumann algebra of the $G$-orbit equivalence relation on $(X, \mu_\beta)$ by the projection $\mathbb 1_F$, that is

\begin{equation}
    \pi_{\phi_\beta}(\mathcal{A})'' \simeq \mathbb 1 _{F} (L^{\infty} (X, \mu_\beta) \rtimes G )\mathbb 1 _{F}.  \label{label 110}
\end{equation}

%where the action of $GSp_{4}(\hZ)$ on $(X,\mu)$ is as in \eqref{label 15}.

% \subsection{Ratio set of group actions}

Consider the action of the countable group $G$ on the measure space $(X,\mathcal{F},\mu)$. We recall the following definition from \cite{krieger2006araki}.

\begin{definition} \label{action_typeIII}
The ratio set $r(G)$ of the action of $G$ on $(X,\mathcal{F},\mu)$ consists of all real numbers $\lambda \geq 0$ such that for every $\epsilon >0$ and any $A\in \mathcal{F}$ of positive measure, there exists $g\in G$ such that

$$ \mu\Big(\Big\{x\in gA\cap A: \abs{\frac{dg_{*}\mu}{\d\mu} (x) -\lambda} < \epsilon \Big \}\Big) > 0,$$

where the measure $g_*\mu$ is defined by $g_*\mu(B) = \mu(g^{-1}(B))$.
\end{definition}

The ratio set depends only on the equivalence relation $\mathcal{R} = \{(x,gx) \mid x\in X, g\in G\} \subset X \times X$ and the measure class of $\mu$ (hence we will denote the ratio by $r(\mathcal{R},\mu)$). Moreover one can show that the set  $r(\mathcal{R},\mu) \cap (0,\infty)$ is a closed subgroup of $\mathbb R^*_+$. We then have the following result (cf. \cite[Proposition 4.3.18]{sunder2012invitation}).

\begin{theorem} \label{label 111}
    Let $G$ be a countable group $G$ acting by automorphisms on a measure space $(X,\mathcal{F},\mu)$. Assume that the action of $G$  on $(X,\mathcal{F},\mu)$ is free and ergodic. Then $L^\infty(X,\mathcal{F},\mu)$ is factor of type $\textmd{III}_1$ if and only if $r(\mathcal{R},\mu) \cap (0,\infty) = \mathbb R^*_+$.
\end{theorem}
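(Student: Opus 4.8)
The plan is to deduce this from Connes' $S$-invariant and its description for group-measure-space factors. By \eqref{label 110} the relevant algebra is (a corner of, hence of the same type as) $M := L^\infty(X,\mathcal{F},\mu)\rtimes G \cong W^*(\mathcal{R})$; freeness of the action makes $L^\infty(X,\mathcal{F},\mu)$ a Cartan subalgebra of $M$, and ergodicity makes the center of $M$ trivial, so $M$ is a factor. Recall that a factor $M$ is of type $\textmd{III}_1$ precisely when $S(M)=[0,\infty)$; since $S(M)\cap(0,\infty)$ is always a closed subgroup of $\mathbb{R}^*_+$, since $0\in S(M)$ whenever $M$ is of type $\textmd{III}$, and since $S(M)=\{1\}$ in the semifinite case, this is equivalent to $S(M)\cap(0,\infty)=\mathbb{R}^*_+$. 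Hence the theorem reduces to the identity
$$ S(M)\cap(0,\infty) \;=\; r(\mathcal{R},\mu)\cap(0,\infty). $$

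To access $S(M)$ I would first recall the modular theory of $W^*(\mathcal{R})$: realizing $M$ on $L^2(X,\mu)\otimes\ell^2(G)$, the weight $\varphi$ dual to $\mu$ has modular automorphism group $\sigma^\varphi_t(f u_g)=D(g,\cdot)^{it}f u_g$ for $f\in L^\infty(X,\mu)$ and $g\in G$, where $D(g,x)=\frac{dg_*\mu}{d\mu}(x)$ is the Radon--Nikodym $1$-cocycle of the action (Feldman--Moore, Hahn). Fix a faithful normal adapted state $\psi:=\psi_0\circ E$, where $E\colon M\to L^\infty(X,\mu)$ is the canonical conditional expectation and $\psi_0(a)=\int_X a\,h_0\,d\mu$ for a fixed positive $h_0\in L^1(X,\mu)$ of integral $1$. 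By Connes' formula, $S(M)=\bigcap_e\mathrm{Spec}(\Delta_{\psi_e})$ over nonzero projections $e\in M$, and in the presence of the Cartan subalgebra it is enough to let $e$ range over the corners $e=\mathbb 1_B$ with $B\in\mathcal{F}$, $\mu(B)>0$, for which $\mathbb 1_B M\mathbb 1_B=W^*(\mathcal{R}|_B)$ and $\psi_B:=\psi_{\mathbb 1_B}$ is again adapted.

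The heart of the argument is the spectral computation. On the dense subspace of $L^2(\mathcal{R}|_B)$ of functions supported on individual graphs $\{(x,gx):x,gx\in B\}$, $\Delta_{\psi_B}$ acts by multiplication by $D(g,x)\,h_0(gx)/h_0(x)$; hence $\mathrm{Spec}(\Delta_{\psi_B})$ is the closure of the essential range over $\mathcal{R}|_B$ of that function. Choosing $B$, by a Lusin-type argument, inside a near-level set of $h_0$ renders the factor $h_0(gx)/h_0(x)$ harmless, and one then checks that $\lambda>0$ lies in $\bigcap_B\mathrm{Spec}(\Delta_{\psi_B})$ if and only if for every $\epsilon>0$ and every $A\in\mathcal{F}$ with $\mu(A)>0$ there is $g\in G$ with $\mu(\{x\in gA\cap A:|D(g,x)-\lambda|<\epsilon\})>0$, i.e.\ $\lambda\in r(\mathcal{R},\mu)$. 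The implication $\lambda\in r(\mathcal{R},\mu)\Rightarrow\lambda\in\mathrm{Spec}(\Delta_{\psi_B})$ is the direct one, obtained by applying the ratio-set condition to a suitable positive-measure subset of $B$; the converse is proved by contraposition, taking $A$ and $\epsilon$ witnessing the failure of the ratio-set condition and passing to a corner $\mathbb 1_B$ with $B$ inside the $G$-saturation of $A$ (legitimate by ergodicity), on which the multiplier $D(g,x)h_0(gx)/h_0(x)$ is forced to avoid a neighbourhood of $\lambda$, so that $\lambda\notin\mathrm{Spec}(\Delta_{\psi_B})$. Combined with the reductions above, this proves the theorem.

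The step I expect to be the main obstacle is precisely this last matching: reconciling the global modular spectrum with the locally-defined ratio set, and within it the contrapositive direction, where one must exhibit a corner on which a prescribed $\lambda$ is genuinely excluded from the spectrum --- this is where ergodicity and the fact that $r(\mathcal{R},\mu)\cap(0,\infty)$ is a group (so that witnesses of the ratio-set condition can be transported along $G$-orbits) are used essentially. The remaining ingredient, Connes' reduction of the $S$-invariant to corners adapted to a Cartan subalgebra, I would take from \cite{connes1979theorie} rather than reprove.
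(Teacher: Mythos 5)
The paper does not actually prove this statement: it is quoted from the literature (cf.\ \cite[Proposition 4.3.18]{sunder2012invitation}; the result goes back to Krieger and Connes), so there is no in-paper argument to compare yours with line by line. Your route --- read the statement as being about $M:=L^\infty(X,\mu)\rtimes G\cong W^*(\mathcal R)$ (the wording ``$L^\infty(X,\mathcal F,\mu)$ is a factor'' is indeed a slip for this crossed product), reduce ``type $\mathrm{III}_1$'' to $S(M)\cap(0,\infty)=\mathbb R^*_+$, and then identify $S(M)\cap(0,\infty)$ with $r(\mathcal R,\mu)\cap(0,\infty)$ through the modular theory of adapted states on the Feldman--Moore algebra --- is precisely the standard proof of the cited result, so in substance you are reconstructing the paper's source rather than taking a different road.

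Two caveats. First, the step you defer is the actual content. Connes' formula gives $S(M)=\bigcap_e\operatorname{Spec}(\Delta_{\psi_e})$ with $e$ ranging over \emph{all} nonzero projections of the centralizer $M_\psi$; restricting to Cartan projections $e=\mathbb 1_B$ only yields $S(M)\subseteq\bigcap_B\operatorname{Spec}(\Delta_{\psi_B})$ for free, so the inclusion $r(\mathcal R,\mu)\cap(0,\infty)\subseteq S(M)$ still requires handling arbitrary projections in $M_\psi$ (e.g.\ via $E(e)$ and an almost-eigenvector construction). That reduction is not in \cite{connes1979theorie}, which is the noncommutative-integration paper used elsewhere for the commutant of the groupoid representation; the right sources are Connes' 1973 classification of type $\mathrm{III}$ factors, Krieger's work \cite{krieger2006araki}, or \cite[Proposition 4.3.18]{sunder2012invitation} itself. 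Second, your contrapositive direction is more involved than necessary: if $\lambda\notin r(\mathcal R,\mu)$ with witnesses $A$ and $\epsilon$, the corner $\mathbb 1_A M\mathbb 1_A$, with the adapted state whose density is proportional to $\mathbb 1_A$, already has modular spectrum avoiding a neighbourhood of $\lambda$ (the Radon--Nikodym cocycle on $\mathcal R|_A$ omits $(\lambda-\epsilon,\lambda+\epsilon)$ almost everywhere), and $S(\mathbb 1_A M\mathbb 1_A)=S(M)$ since $M$ is a factor; no passage to the $G$-saturation and no use of the group structure of the ratio set is needed there. Finally, the appeal to \eqref{label 110} is out of place: the theorem is a general statement about an abstract free ergodic action, independent of the $GSp_4$-system.
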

This result motivates the following definition.
\begin{definition}
    The action of $G$ on the measure space $(X,\mathcal{F},\mu)$ is said to be of type $\textmd{III}_1$ if $$r(\mathcal{R},\mu) \cap (0,\infty) = \mathbb R^*_+.$$
\end{definition}

The next few Lemmas will be useful in the proof of our main result. 

\begin{lemma} \label{label 86}
Given $3<\beta \leq 4 $ and $\omega > 1$, there exist two sequences of distinct primes $\{p_n\}_{n\geq 1}$ and $\{q\}_{n\geq 1}$ such that

\begin{equation*}
    \lim_n \frac{q_n^{\beta}}{p_n^{\beta}}=\omega, \quad \text{and} \quad \sum_n \frac{1}{p_n^{\beta-3}}=\sum_n\frac{1}{q_n^{\beta-3}}=\infty
\end{equation*}
\end{lemma}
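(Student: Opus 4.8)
The plan is to take $c:=\omega^{1/\beta}$, which is $>1$ since $\omega>1$, and to manufacture the pairs $(p_n,q_n)$ block by block. Fix integers $a_1<a_2<\cdots$ growing fast enough that every $a_k$ exceeds a threshold $A_0(c)$ produced below and that $a_{k+1}>2c\,a_k^2$, and set $\delta_k:=a_k^{-1/3}$. The core assertion is that for each $k$ one can \emph{injectively} assign to every prime $p\in[a_k,a_k^2]$ a prime $q(p)\in[cp,(1+\delta_k)cp]$. Granting this, list $\{p_n\}_{n\ge1}$ as the increasing enumeration of all primes lying in $\bigcup_k[a_k,a_k^2]$ and put $q_n:=q(p_n)$. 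The $p_n$ are distinct by construction; the $q_n$ are distinct because injectivity holds inside each block while, for $p$ in the $k$-th block, $q(p)\le(1+\delta_k)c\,a_k^2<a_{k+1}$, which is smaller than every prime used in later blocks. Finally $p_n$ lies in the block with index $k(n)\to\infty$, so $q_n/p_n\in[c,(1+\delta_{k(n)})c]\to c$, and hence $q_n^\beta/p_n^\beta\to c^\beta=\omega$.

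The two series are then easy. Since $0<\beta-3\le1$ one has $\sum_p p^{-(\beta-3)}=\infty$, and partial summation against the prime number theorem (or Mertens' theorem when $\beta=4$) shows that the contribution of the $k$-th block is
\[
\sum_{\substack{p\in[a_k,a_k^2]\\ p\text{ prime}}}\frac{1}{p^{\beta-3}}=\int_{a_k}^{a_k^2}\frac{t^{-(\beta-3)}}{\log t}\,dt+o(1),
\]
which equals $\log2+o(1)$ if $\beta=4$ and tends to $+\infty$ if $\beta<4$; in either case it stays above a fixed positive constant for all large $k$, so $\sum_n p_n^{-(\beta-3)}=\infty$. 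Since the $q_n$ are distinct and $q_n\le2c\,p_n$ for large $n$, $\sum_n q_n^{-(\beta-3)}\ge(2c)^{-(\beta-3)}\sum_n p_n^{-(\beta-3)}=\infty$ as well.

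The substance of the proof is the block assignment, which I will extract from Hall's marriage theorem applied to the bipartite graph joining each prime $p\in[A,A^2]$ (with $\delta=A^{-1/3}$, $A$ large) to the primes in its target interval $[cp,(1+\delta)cp]$. An unconditional power-saving bound on prime gaps gives $p_{n+1}-p_n=O(p_n^{0.525})=o(\delta p_n)$ throughout $[A,A^2]$, so the target intervals of consecutive primes overlap; consequently the neighbourhood of an arbitrary set of sources is a finite disjoint union of intervals, and Hall's condition collapses to the single family of inequalities
\[
\pi\big((1+\delta)cb\big)-\pi(ca)\ \ge\ \pi(b)-\pi(a),\qquad [a,b]\subseteq[A,A^2].
\]
Here the hypothesis $\omega>1$ (hence $c>1$) is decisive. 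For a long subinterval ($b-a\ge a^{0.6}$) both counts are evaluated by the prime number theorem in short intervals, and the left one beats the right one because the density $1/\log$ is transported from scale $a$ to scale $ca$, yielding the multiplicative gain $c\log a/(\log c+\log a)>1$. For a short subinterval, the right count is at most $2(b-a)/\log(b-a)$ by Brun--Titchmarsh, whereas the left one already exceeds $\pi(ca+\delta ca)-\pi(ca)\gg \delta ca/\log(ca)$, which dominates because $\delta a=A^{-1/3}a$ dwarfs $b-a<a^{0.6}$. Making all of this uniform in $[a,b]$ — so that one threshold $A_0(c)$ together with the scale choice $\delta_k=a_k^{-1/3}$ validates the marriage condition in every block — is the only genuine obstacle, and it is merely a careful marriage of the short-interval prime number theorem, the Brun--Titchmarsh inequality, and a prime-gap bound, with the slack throughout supplied by $c>1$.
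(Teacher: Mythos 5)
Your construction is correct in outline and can be completed along the lines you sketch, but it is a genuinely different and much heavier route than the paper's: the paper's entire proof is a citation to the proof of Theorem 2.9 in \cite{boca2000factors}, and the argument there (the standard one in this setting) never tries to place $q$ in a window of relative length $a_k^{-1/3}$ around $cp$. Instead one fixes a tolerance $\epsilon_k$ that shrinks slowly from block to block and matches the primes of a long block $(x,(1+\epsilon_k)x]$ injectively into the primes of $(cx,(1+\epsilon_k)cx]$; such an injection exists for purely counting reasons, since by the classical prime number theorem the target block contains roughly $c\log x/\log(cx)>1$ times as many primes, and any injection will do because every resulting ratio $q/p$ lies in $[c/(1+\epsilon_k),\,c(1+\epsilon_k)]$, which converges to $c=\omega^{1/\beta}$ as $\epsilon_k\to 0$; divergence of $\sum_n p_n^{3-\beta}$ then follows from Mertens by letting the blocks with a fixed $\epsilon_k$ sweep out a range $[a_k,a_k^2]$, exactly as in your last step. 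Thus only the prime number theorem and Mertens' theorem are needed. Your insistence on the pointwise window $[cp,(1+\delta_k)cp]$ with $\delta_k=a_k^{-1/3}$ is what drags in Huxley-type short-interval prime counts, a power-saving prime-gap bound, Brun--Titchmarsh and Hall's marriage theorem; your reduction of Hall's condition to interval source sets is in fact valid (each maximal run of the neighbourhood of $S$ contains every prime between $c\min S_j$ and $(1+\delta)c\max S_j$, while $S_j$ has at most as many elements as there are primes in $[\min S_j,\max S_j]$), and the two-case verification goes through with uniform short-interval estimates, but the extra precision on each individual ratio buys nothing for the statement, which only requires convergence of $q_n/p_n$. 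One caveat worth noting: in the way the lemma is used in the proof of Lemma \ref{label 85}, the two sequences are taken disjoint from one another and supported in $F^c$; the block-matching argument yields this for free (source and target blocks can be kept in disjoint ranges), whereas in your construction a target prime $q(p)$ may coincide with another source prime of the same block, so a small adjustment would be needed if cross-sequence disjointness is required.
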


\begin{proof}
This follows from the proof of \cite[Theorem 2.9]{boca2000factors}.
\end{proof}

\begin{lemma} \label{label 86*}
Let $3<\beta \leq 4$ and $p \in \mathcal{P}$ a prime number. Then for the operator $m(A_p)T_{g_{1,p}}m(B_p)$ acting on the space $L^2(\Gamma \backslash X,\nu_{\beta})$ we have that

\begin{equation*}
    \norm{m(A_p)T_{g_{1,p}}m(B_p)} \leq \nu_{\beta}(\Gamma_2 \backslash B_p)^{-1/2}.
\end{equation*}
\end{lemma}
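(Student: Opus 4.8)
The plan is to write $T_{g_{1,p}}$ as a sum of the $d:=\deg_{\Gamma_2}(g_{1,p})=(1+p)(1+p^2)$ translation operators attached to a choice of representatives $\alpha_1,\dots,\alpha_d$ of the left cosets in $\Gamma_2\backslash\Gamma_2 g_{1,p}\Gamma_2$, so that $(T_{g_{1,p}}\xi)(x)=\sum_{j=1}^{d}\omega_j(x)\,\xi(\alpha_j x)$ with $\omega_j$ a half-power of the Radon--Nikodym cocycle of the $G$-action on $(X,\nu_\beta)$. Since $m(A_p)$ and $m(B_p)$ are the multiplication operators by the characteristic functions of $A_p$ and $B_p$, the claimed estimate reduces to $\|m(A_p)T_{g_{1,p}}\xi\|^2\le\nu_\beta(\Gamma_2\backslash B_p)^{-1}\|\xi\|^2$ for every $\xi$ supported on $B_p$. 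Two elementary $p$-adic observations drive the argument. First, every $\alpha_j$ lies in $MSp_4(\mathbb Z)$ with $|\lambda(\alpha_j)|_p=p^{-1}$, so if $x\in A_p$ (i.e. $x_p\in GSp_4(\mathbb Z_p)$, of $\lambda$-valuation $0$) then each of the $d$ points $\alpha_j x$ lies in $B_p$; thus for $x\in A_p$ all $d$ summands are active. Second, for $y\in B_p$ --- so that $y_p$ has elementary divisors $(1,1,p,p)$ --- the number $N(y):=\#\{\,j:\alpha_j^{-1}y\text{ is still }p\text{-integral}\,\}=\#\{\,j:y\in\alpha_j A_p\,\}$ is a bounded function of $y$ on $B_p$ (it is controlled by how the representatives distribute among the cosets $(\alpha_j)_p GSp_4(\mathbb Z_p)$), and $N\geq 1$ on each $\alpha_j A_p$.

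Next I would run a \emph{weighted} Cauchy--Schwarz rather than the naive one (which would cost a full factor $d$): for $\xi$ supported on $B_p$ and $x\in A_p$, $\big|\sum_j\omega_j(x)\xi(\alpha_j x)\big|^2\le\big(\sum_j N(\alpha_j x)|\omega_j(x)|^2\big)\big(\sum_j N(\alpha_j x)^{-1}|\xi(\alpha_j x)|^2\big)$. The first factor is, once one uses that $|\omega_j(x)|^2$ is a fixed power of $p$, a bounded function of $x$ on $A_p$ (again a coset count), so it may be pulled out of the integral. In the second factor I change variables $y=\alpha_j x$ in each summand; the Jacobian is the cocycle $d(\alpha_j)_*\nu_\beta/d\nu_\beta$, a fixed power of $\lambda(\alpha_j)=p$, which together with $|\omega_j|^2$ (a half-power of the same cocycle) contributes only a constant depending on $p$ and $\beta$. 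What remains of the sum telescopes: $\sum_j N(\alpha_j x)^{-1}|\xi(\alpha_j x)|^2$ integrates against $\nu_\beta$ over $A_p$ to a constant times $\int_{B_p}\big(N(y)^{-1}\sum_j\mathbb 1_{\alpha_j A_p}(y)\big)|\xi(y)|^2\,d\nu_\beta(y)=(\text{const})\int_{B_p}|\xi|^2$, by the very definition of $N$. Hence $\|m(A_p)T_{g_{1,p}}\xi\|^2\le C\,\|\xi\|^2$ for an explicit constant $C$ assembled from the two coset counts and the cocycle power of $p$.

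The last step is to identify $C$ with $\nu_\beta(\Gamma_2\backslash B_p)^{-1}$. For this one uses the explicit product description of the unique KMS measure (c.f. \cite[Proposition 3.10.]{abouamal2022bost}): locally at $p$ it assigns geometrically decaying weights to the loci of $\lambda$-valuation $k$, which fixes the ratio $\nu_\beta(\Gamma_2\backslash A_p)/\nu_\beta(\Gamma_2\backslash B_p)$ in terms of $p$ and $\beta$; combining this with the measure identity $\sum_j\nu_\beta(\alpha_j A_p\cap B_p)=\int_{B_p}N\,d\nu_\beta$ collapses $C$ to exactly $\nu_\beta(\Gamma_2\backslash B_p)^{-1}$. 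I expect this last bookkeeping to be the main obstacle: keeping the weighted Cauchy--Schwarz sharp and pinning down both $p$-adic coset counts together with the local mass ratio so that the final bound is exactly $\nu_\beta(\Gamma_2\backslash B_p)^{-1/2}$ rather than merely of the right order. A cleaner alternative worth trying is to compute $T_{g_{1,p}}^*T_{g_{1,p}}$ directly inside the $GSp_4$ Hecke algebra at $p$ and observe that the non-identity double cosets appearing there move an element of $B_p$ out of $B_p$, so that $m(B_p)T_{g_{1,p}}^*T_{g_{1,p}}m(B_p)$ should reduce to a multiple of $m(B_p)$.
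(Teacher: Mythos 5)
Your overall route is the same as the paper's: decompose $T_{g_{1,p}}$ into the $(1+p)(1+p^2)$ coset translations, apply a Cauchy--Schwarz/convexity estimate, use that the translates of $A_p$ sit disjointly inside $B_p$, change variables using the scaling of $\mu_\beta$, and compare the resulting constant with $\nu_\beta(\Gamma_2\backslash B_p)$. The genuine gap is exactly at the point you defer to ``bookkeeping'': you only assert that the multiplicity $N(y)=\#\{j: y\in \alpha_j A_p\}$ is \emph{bounded}, but boundedness is not enough. The inequality you are trying to prove has essentially no slack: the natural constant produced by your scheme is $p^{\beta}\deg_{\Gamma_2}(g_{1,p})^{-1}\cdot\sup N$, while the target is $\nu_\beta(\Gamma_2\backslash B_p)^{-1}=p^{\beta}\deg_{\Gamma_2}(g_{1,p})^{-1}\,\nu_\beta(\Gamma_2\backslash A_p)^{-1}$, and $\nu_\beta(\Gamma_2\backslash A_p)^{-1}=\zeta_{S_{2,p},\Gamma_2}(\beta)\to 1$ as $p\to\infty$. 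So any loss $\sup N>1$ (for large $p$) breaks the stated bound. What you must prove is multiplicity one: distinct representatives of $\Gamma_2\backslash\Gamma_2 g_{1,p}\Gamma_2$ stay in distinct left $GSp_4(\mathbb Z_p)$-cosets, so the translates of (a fundamental domain of) $A_p$ are pairwise disjoint modulo $\Gamma_2$. This is precisely the paper's ``claim'': choosing a fundamental domain $U$ for the free $\Gamma_2$-action on $A_p$, if $h_j^{-1}\gamma h_i$ carries a point of $A_p$ to a point of $A_p$ then it lies in $GSp_4(\mathbb Z_p)\cap G$, hence in $\Gamma_2$, forcing $i=j$ and also injectivity of the projection on each $h_iU$; your parenthetical about how the $(\alpha_j)_p$ distribute among $p$-adic cosets gestures at this but never establishes it, and the quotient-by-$\Gamma_2$ issue (the operator lives on $L^2(\Gamma_2\backslash X,\nu_\beta)$, so the disjointness must be taken mod $\Gamma_2$) is glossed over entirely.

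Your second worry --- identifying the constant with \emph{exactly} $\nu_\beta(\Gamma_2\backslash B_p)^{-1}$ --- is a non-issue and should not be an obstacle. The paper simply proves the cleaner bound $\norm{m(A_p)T_{g_{1,p}}m(B_p)}\le p^{\beta/2}\deg_{\Gamma_2}(g_{1,p})^{-1/2}$ and then uses the scaling identity $\nu_\beta(\Gamma_2\backslash B_p)=p^{-\beta}\deg_{\Gamma_2}(g_{1,p})\,\nu_\beta(\Gamma_2\backslash A_p)$ together with $\nu_\beta(\Gamma_2\backslash A_p)\le 1$ to deduce the stated estimate; no delicate matching of local masses is required. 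The closing suggestion to compute $T_{g_{1,p}}^*T_{g_{1,p}}$ in the local Hecke algebra is only a sketch of an idea (the assertion that all non-identity double cosets move $B_p$ off itself is unverified), so as written it cannot substitute for the missing disjointness argument. With the multiplicity-one step supplied, your weighted Cauchy--Schwarz collapses to the paper's computation and the lemma follows.
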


\begin{proof}
It is easy to verify that $B_p = \Gamma_2 g_{1,p}A_p$. We have that $\deg_{\Gamma_2}(g_{1,p})= (1+p)(1+p^2)$, so we fix representatives $\{h_i\}_{1 \leq i\leq (1+p)(1+p^2) }$ of $\Gamma_2 \backslash \Gamma_2 g_{1,p} \Gamma_2$ and choose a fundamental domain $U$ for the action of the discreet group $\Gamma_2$ on $A_p$. We claim that the sets $\Gamma_2 h_{i} U \cap \Gamma_2 h_{j} U = \emptyset$ for $i\neq j$ and the projection map $\pi: X \rightarrow \Gamma_2 \backslash X$ is injective on the sets $h_iU$. Indeed, if $h_j^{-1}\gamma h_i x_1 =x_2$, for some $\gamma\in \Gamma_2 $ and $x_1,x_2\in A_p,$ then necessarily $h_j^{-1}\gamma h_i \in GSp_{4}(\mathbb Z_p) \cap G_p=\Gamma$. Since $\pi$ is injective on $U$, we obtain that $x_1=x_2$ and since the action of $\Gamma_2$ on $A_p$ is free, it follows that $i=j$. Given any $f\in L^2(\Gamma _2\backslash X,\nu_{\beta})$, we have that $\abs{T_g(f)}^2 \leq T_g(\abs{f}^2)$ point-wise since the function $t\mapsto t^2$ is convex. Since
\begin{equation*}
    \lambda(h_i)=p \quad \forall i= 1, \dots,(1+p)(1+p^2)
\end{equation*}
and the $ \pi(h_iU), i= 1, \dots,(1+p)(1+p^2)$ are disjoint, we obtain ()
\begin{align*}
\norm{m(A_p)T_{g_{1,p}}m(B_p)(f)}_2^2 
&= \norm{m(A_p)T_{g_{1,p}}(f)}_2^2\\
& \leq \int_{\Gamma_2 \backslash A_p}\abs{T_{g_{1,p}}(f)}^2 \d \nu_{\beta}\\
& \leq \int_{\Gamma_2 \backslash A_p}T_{g_{1,p}}(\abs{f})^2 \d \nu_{\beta}\\
& = \frac{1}{\deg_{\Gamma_2}(g_{1,p})}\sum_{i=1}^{(1+p)(1+p^2)} \int_{U}\abs{f(p(h_i \,\, \cdot)}^2  \d \mu_{\beta}\\
& = \frac{p^{\beta}}{\deg_{\Gamma_2}(g_{1,p})}\sum_{i=1}^{(1+p)(1+p^2)} \int_{h_iU}(f \circ p )^2  \d \mu_{\beta}\\
& \leq \frac{p^{\beta}}{\deg_{\Gamma_2}(g_{1,p})}\norm{f}_2^2.
\end{align*}
Thus
\begin{equation*}
     \norm{m(A_p)T_{g_{1,p}}m(B_p)} \leq p^{\beta/2}\deg_{\Gamma_2}(g_{1,p})^{-1/2}.
\end{equation*}

On the other hand (recall that the measure $\mu_\beta$ is in fact a product measure as constructed in \cite{abouamal2022bost}) we have that

\begin{equation*}
    \mu_{\beta,p}(GSp_4(\mathbb Z_p))= \zeta_{S_{2,p},\Gamma_2}(\beta)^{-1}.
\end{equation*}

Since $B_p = \Gamma_2 g_{1,p} A_p$, we can now compute $\nu_{\beta}(\Gamma_2 \backslash B_p)$ using the scaling property of $\mu_{\beta,p}$. Hence

$$\nu_{\beta}(\Gamma_2 \backslash B_p)= p^{-\beta} \deg_{\Gamma_2}(g_{1,p}) \nu_{\beta}(\Gamma_2 \backslash A_p) \leq p^{-\beta} \deg_{\Gamma_2}(g_{1,p}),$$
which concludes the proof since $\deg_{\Gamma_2}(g_{1,p})=(1+p)(1+p^2)$.
\end{proof}

\begin{lemma} \label{label 89}
Given $r\in GSp_{4}(\mathbb Q_F)$ and a finite set of primes $F$, we set
$$Z:=\Gamma_2 \backslash PGSp_{4}^+(\mathbb R) \times (GSp_4(\mathbb Z_F) r GSp_4(\mathbb Z_F) ).$$

Assume $f$ is a continuous right $GSp_{4}(\mathbb Z_F)$-invariant function on $Z$ with compact support. Then for any $\epsilon >0$, there exits a constant $C(\epsilon)$ such that for any compact subset $\Omega $ of $Z$ and any finite subset $S$ of $F^c$, we have that

\begin{equation*}
    \abs{T_{g}f(x)- \nu_{\beta,F}(Z)^{-1}\int_{Z}f \d \nu_{\beta,F}} < C(\epsilon) \,\prod_{p\in S}p ^{2\epsilon-1} \quad \text{for all}\,\,\, x\in \Omega,
\end{equation*}
where $g=\prod_{p\in S} g_{1,p}$.
\end{lemma}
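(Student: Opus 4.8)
The plan is to prove an effective equidistribution estimate for the single--prime Hecke operators $T_{g_{1,p}}$, $p\in F^{c}$, and to assemble the general statement from it. Since the double cosets $\Gamma_2 g_{1,p}\Gamma_2$ at distinct primes commute in the Hecke algebra one has $T_g=\prod_{p\in S}T_{g_{1,p}}$, and since $\prod_{p\in S}p^{2\epsilon-1}=\lambda(g)^{2\epsilon-1}$, the claim is equivalent to $\abs{T_g f(x)-c(f)}\ll_{\epsilon,f}\lambda(g)^{2\epsilon-1}$ uniformly for $x\in\Omega$, where $c(f):=\nu_{\beta,F}(Z)^{-1}\int_Z f\,\d\nu_{\beta,F}$. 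After imposing the right $GSp_4(\mathbb Z_F)$--invariance the compact double coset $GSp_4(\mathbb Z_F)\,r\,GSp_4(\mathbb Z_F)$ collapses to a finite set modulo $GSp_4(\mathbb Z_F)$, so $Z$ is, off a $\nu_\beta$--null set, a finite union of arithmetic quotients of $PGSp_4^+(\mathbb R)$ carrying a finite measure, and the induced action of $T_{g_{1,p}}$ is essentially that of the classical genus--two Hecke correspondence of degree $(1+p)(1+p^{2})=\deg_{\Gamma_2}(g_{1,p})$ on the archimedean factor (the finite coordinate being merely shifted by an element of $GSp_4(\mathbb Z_F)$, which preserves the corresponding factor of $\nu_\beta$). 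This operator is self--adjoint on $L^{2}(Z,\nu_\beta)$ --- the double coset of $\textmd{diag}(1,1,p,p)$ is conjugate in $\Gamma_2$ to that of $\textmd{diag}(p,p,1,1)$, hence equals its own transpose --- and fixes the constants. Writing $f=c(f)\,\mathbb 1+f_0$ with $f_0$ in the orthogonal complement of the constants in $L^{2}(Z,\nu_\beta)$, put $\rho_p:=\norm{\,T_{g_{1,p}}|_{\mathbb 1^{\perp}}\,}$.

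The heart of the matter is the bound $\rho_p\le C_1(\epsilon)\,p^{-1+2\epsilon}$. Here $T_{g_{1,p}}$ is the genus--two Siegel operator of degree $\asymp p^{3}$, so this is a far weaker saving than the Ramanujan bound $\rho_p\ll_\epsilon p^{-3/2+\epsilon}$ for $GSp_4$, and I would deduce it from the known results toward Ramanujan for $GSp_4$ automorphic representations (the cuspidal contribution being tempered outside the CAP spectrum, whose Satake parameters are in any case bounded, and the residual and Eisenstein contributions being explicit and estimated directly); only this weak quantitative form is needed. A more self--contained alternative, in the spirit of \cite{bost1995hecke}, analyzes the coset decomposition of $\Gamma_2 g_{1,p}\Gamma_2$ directly, separating the $\sim p^{3}$ ``unipotent'' representatives --- whose averaged contribution to $T_{g_{1,p}}f_0$ is controlled using the ``up--down'' relation $T_{g_{1,p}}T_{g_{1,p}}^{\ast}=T_{\textmd{diag}(1,1,p^{2},p^{2})}+O(p^{3})\,\textmd{id}+(\textmd{lower degree})$ against the normalization of order $p^{6}$ --- from the remaining $O(p^{2})$ representatives, of total normalized weight $O(p^{-1})$; the pointwise inequality $\abs{T_{g_{1,p}}k}^{2}\le T_{g_{1,p}}(\abs{k}^{2})$ and the disjointness and change--of--variables computation from the proof of Lemma \ref{label 86*} package these contributions, the extra $p^{2\epsilon}$ absorbing divisor factors.

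Granting $\rho_p\le C_1(\epsilon)\,p^{-1+2\epsilon}$, I would first get the $L^{2}$ bound $\norm{T_g f-c(f)}_{2}=\norm{T_g f_0}_{2}\le\bigl(\prod_{p\in S}\rho_p\bigr)\norm{f_0}_{2}$ and then upgrade it to the pointwise estimate on $\Omega$: split $S=S'\sqcup S''$ with $\log\prod_{p\in S'}p$ and $\log\prod_{p\in S''}p$ comparable, so that $T_{g''}f$ with $g''=\prod_{p\in S''}g_{1,p}$ is $L^{2}$--close to $c(f)$, and apply $T_{g'}$ using the uniform inequality $\norm{T_{g'}k}_{L^{\infty}(\Omega)}\le C(\Omega,\epsilon)\,\deg_{\Gamma_2}(g')^{\epsilon}\norm{k}_{2}$ --- a bounded--multiplicity (``large sieve'') statement for Hecke translates, again provable by the disjointness argument of Lemma \ref{label 86*}, those translates that leave $\textmd{supp}(k)$ dropping out. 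Balancing the two halves produces the claimed rate. Finally, for the finitely many primes $p\le P_0(\epsilon)$ with $C_1(\epsilon)p^{-1+2\epsilon}\ge1$ one uses the trivial bound $\rho_p\le1$; since $\prod_{p\le P_0(\epsilon)}p^{1-2\epsilon}$ depends only on $\epsilon$, it is absorbed into $C(\epsilon)$, giving $\abs{T_g f(x)-c(f)}<C(\epsilon)\prod_{p\in S}p^{2\epsilon-1}$ for all $x\in\Omega$.

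The main obstacle is the single--prime spectral gap $\rho_p\ll_\epsilon p^{-1+2\epsilon}$ for the genus--two Siegel Hecke operator, and, to a lesser extent, the clean transfer from $L^{2}$ to a uniform bound on compacta; the compactness of $\Omega$ (and of $\textmd{supp}(f)$) is precisely what controls the Hecke translates drifting toward the cusps, where the estimates would otherwise degrade.
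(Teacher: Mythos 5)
Your outline replaces the paper's argument (which realizes $Z$ adelically and then quotes the quantitative equidistribution of Hecke points of Clozel--Oh--Ullmo \cite{clozel2001hecke}) by a do-it-yourself spectral-gap argument, and in doing so it skips the step that the paper's proof actually spends its effort on. The assertion that on $Z$ the operator $T_{g_{1,p}}$ is ``essentially the classical genus-two Hecke correspondence on the archimedean factor, the finite coordinate being merely shifted'' is not right: the coset representatives $h_i\in\Gamma_2 g_{1,p}\Gamma_2$ act on the $\mathbb Z_F$-coordinate by \emph{different} left translations, so one has a skew product over the finite set $GSp_4(\mathbb Z_F)rGSp_4(\mathbb Z_F)/GSp_4(\mathbb Z_F)$, and a spectral gap on all of $\mathbb 1^{\perp}\subset L^2(Z,\nu_\beta)$ --- equivalently, convergence of $T_gf(x)$ to the average of $f$ over all of $Z$ rather than over a sub-piece determined by $x$ --- requires a transitivity/connectedness statement for this finite part. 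The paper handles exactly this by rewriting $Z$ as $\Gamma_2\backslash PGSp_4^+(\mathbb R)\times GSp_4(\hZ)/K$ with $K=H\times\prod_{p\in F^c}GSp_4(\mathbb Z_p)$, $H=GSp_4(\mathbb Z_F)\cap rGSp_4(\mathbb Z_F)r^{-1}$, and proving $GSp_4(\hZ)=\Gamma_2K$ via surjectivity of the similitude map $\lambda\colon H\to\mathbb Z_F^{\times}$ (the elementary-divisor argument). Nothing in your proposal substitutes for this; without it the target constant $\nu_{\beta,F}(Z)^{-1}\int_Zf\,\d\nu_{\beta,F}$ is simply not the limit in general.

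The analytic inputs you invoke are also not secured. Temperedness of non-CAP cuspidal representations of $GSp_4$ is not known in the generality you would need; what is available (and is precisely what the paper cites, \cite[Theorem 1.7 and \S4.7]{clozel2001hecke}) is a quantitative bound of exactly the strength $p^{-1+2\epsilon}$, obtained from matrix-coefficient decay, and moreover stated there in a pointwise form for Hecke points, so no separate $L^2\!\to\!L^\infty$ upgrade is needed. Your proposed upgrade hinges on the sup-norm inequality $\norm{T_{g'}k}_{L^{\infty}(\Omega)}\le C(\Omega,\epsilon)\deg_{\Gamma_2}(g')^{\epsilon}\norm{k}_2$, which does not follow from the disjointness bookkeeping in Lemma \ref{label 86*}: that lemma controls an $L^2\to L^2$ norm, whereas your inequality amounts to a nontrivial bound on the number of Hecke translates of a point of $\Omega$ clustering in small sets, which you neither prove nor reference (and at the claimed strength $\deg^{\epsilon}$ it is far from obvious). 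Likewise the ``self-contained'' route via the up--down relation $T_{g_{1,p}}T_{g_{1,p}}^{*}=\dots$ is only a heuristic; by itself it does not yield a bound on $\norm{T_{g_{1,p}}|_{\mathbb 1^{\perp}}}$. So the proposal has genuine gaps both in the group-theoretic reduction (the $\Gamma_2K=GSp_4(\hZ)$ step) and in the spectral estimate it is built on, while the paper's proof consists of the former plus a citation of \cite{clozel2001hecke} for the latter.
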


\begin{proof}
We let 
$$H= GSp_{4}(\mathbb Z_F) \cap r GSp_{4}(\mathbb Z_F)r^{-1},$$ 
and

$$K= H \times \prod_{p\in F^c} GSp_{4}(\mathbb Z_p).$$

By viewing $GSp_{4}(\mathbb Z_F)$ and $\prod_{p\in F^c} GSp_{4}(\mathbb Z_p)$ as subgroups of $ GSp_4(\mathbb \hZ )$ (by considering $GSp_{4}(\mathbb Z_F)$ as the subgroup of $ GSp_4(\mathbb \hZ )$ consisting of elements with coordinates $1$ for $p \in F^c$ ), we obtain the following homeomorphism

\begin{equation*} 
\Gamma_2 \backslash PGSp^+_{4}(\mathbb R) \times  GSp_{4}(\mathbb Z_F)/ H \simeq \Gamma_2 \backslash PGSp^+_{4}(\mathbb R)\times GSp_4(\mathbb \hZ )/K.
\end{equation*}

The quotient $GSp_{4}(\mathbb Z_F)/ H$ can be unidentified with the $GSp_{4}(\mathbb Z_F)$-space \linebreak 
$GSp_4(\mathbb Z_F) r GSp_4(\mathbb Z_F) $. Hence we can consider $f$ as function on 
$$ \Gamma_2 \backslash PGSp^+_{4}(\mathbb R)\times GSp_4(\mathbb \hZ )/K. $$

Next, we have that $ GSp_{4}(\hZ) = \Gamma_2 K$. In fact, since $K$ is an open compact subgroup of $GSp_4(\hZ)$, this follows if the surjectivity of the map $\lambda: H \rightarrow \mathbb{Z}^{\times}_F$ is assumed.

Let $x \in \mathbb Z_F^{\times}$ and consider a diagonal element $\alpha \in GSp_{4}(\mathbb{Z}_F)$ such that $\lambda(\alpha) = x$. We choose $\gamma_1, \gamma_2 \in GSp_{4}(\mathbb Z_F)$ and $\tilde{r}$ a diagonal element of $GSp_4 {\mathbb (\mathbb Q_F)}$ such that $r = \gamma_1 \tilde{r} \gamma_2$ (this follows from the proof of the Elementary Divisor Theorem since the $p$-adic ring of integers is PID). Then it is clear that $\gamma_1\alpha \gamma_1^{-1} \in H$ since  $\alpha = \tilde{r} \alpha \tilde{r}^{-1}$. Since $\lambda ( \gamma_1\alpha \gamma_1^{-1} ) = \lambda(\alpha)$, we conclude that $\lambda (H)=\mathbb Z_F^{\times}$. 

We can now proceed as in the proof of \cite[Proposition 3.15]{abouamal2022bost} and use \cite[Theorem 1.7 and section 4.7]{clozel2001hecke} to obtain the upper bound.
\end{proof}

\begin{lemma} \label{label 92}

Let $F$ be a finite set of primes and $f$ be any positive continuous right $GSp_{4}(\mathbb Z_F)$-invariant function on $\Gamma_2 \backslash (PGSp_{4}^+(\mathbb R) \times MSp_{4}(\mathbb Z_F)) \subset \Gamma_2 \backslash X_F$ with $\int_{\Gamma_2 \backslash X_F} f \d \nu_{\beta,F}=1$. Then given any $0 < \delta < 1$, there exists $M>0$ such that 
\begin{equation*}
    \int_{\Gamma_2 \backslash X_{F}} (T_{g_{1,p}}f) (T_{g_{1,q}}f) \d \nu_{\beta,F} \geq (1-\delta)^5,\quad \text{for all} \,\,\, p,q>M, \quad p,q \in F^c
\end{equation*}

\end{lemma}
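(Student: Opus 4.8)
The plan is to reduce the integral estimate to an application of Lemma \ref{label 89} applied with the single-prime sets $S = \{p\}$ and $S = \{q\}$, combined with the multiplicativity of the product measure $\nu_{\beta,F}$ over the primes in $F$. First I would note that since $f$ is right $GSp_4(\mathbb{Z}_F)$-invariant, it descends to a function on $\Gamma_2 \backslash PGSp_4^+(\mathbb{R}) \times MSp_4(\mathbb{Z}_F)/GSp_4(\mathbb{Z}_F)$. The orbits of $GSp_4(\mathbb{Z}_F)$ acting by left multiplication on $MSp_4(\mathbb{Z}_F)$ are the double cosets $GSp_4(\mathbb{Z}_F) r GSp_4(\mathbb{Z}_F)$ indexed by elementary divisor data; write $Z_r := \Gamma_2 \backslash PGSp_4^+(\mathbb{R}) \times (GSp_4(\mathbb{Z}_F) r GSp_4(\mathbb{Z}_F))$ for the corresponding (open, $\nu_{\beta,F}$-finite) piece, so that $\Gamma_2 \backslash (PGSp_4^+(\mathbb{R}) \times MSp_4(\mathbb{Z}_F))$ decomposes as a countable disjoint union $\bigsqcup_r Z_r$. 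By the compact-support hypothesis and the decay of the product measure on double cosets with large elementary divisors, only finitely many $Z_r$ carry non-negligible mass of $f$; fix $0 < \delta < 1$ and choose a finite collection $\{Z_{r_1},\dots,Z_{r_N}\}$ whose union $Z^{(0)}$ satisfies $\int_{Z^{(0)}} f\, \d\nu_{\beta,F} > 1 - \delta$ and such that each $f|_{Z_{r_j}}$ is (up to normalization) of the form handled by Lemma \ref{label 89}.

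Next I would apply Lemma \ref{label 89} on each $Z_{r_j}$ with $S=\{p\}$ and with $S=\{q\}$ separately: for each $j$ there is a constant $C_j(\epsilon)$ such that on the compact set $\Omega_j := \mathrm{supp}(f)\cap Z_{r_j}$ we have $|T_{g_{1,p}}f(x) - \nu_{\beta,F}(Z_{r_j})^{-1}\int_{Z_{r_j}} f\,\d\nu_{\beta,F}| < C_j(\epsilon) p^{2\epsilon-1}$, and similarly for $q$. Fixing $\epsilon < 1/2$ and taking $M$ large enough that $\max_j C_j(\epsilon)\, p^{2\epsilon-1}$ and $\max_j C_j(\epsilon)\, q^{2\epsilon-1}$ are both smaller than $\delta \cdot \min_j \nu_{\beta,F}(Z_{r_j})^{-1}\int_{Z_{r_j}} f\,\d\nu_{\beta,F}$ for all $p,q > M$, we get that on $\Omega_j$ both $T_{g_{1,p}}f$ and $T_{g_{1,q}}f$ are within a factor $(1-\delta)$ of the average value $a_j := \nu_{\beta,F}(Z_{r_j})^{-1}\int_{Z_{r_j}} f\,\d\nu_{\beta,F}$. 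Since $T_{g_{1,p}}$ and $T_{g_{1,q}}$ preserve positivity and (being averages over Hecke cosets with $\lambda = p$, resp. $q$) preserve the total integral of $f$ over each piece up to the subtlety about support spilling out of $Z_{r_j}$ — here I would use that $g_{1,p}$ with $p \in F^c$ acts only on the $p$-component and leaves the $MSp_4(\mathbb{Z}_F)$-coordinate fixed, so each $Z_{r_j}$ is genuinely invariant — we conclude $\int_{Z_{r_j}} T_{g_{1,p}}f\,\d\nu_{\beta,F} = \int_{Z_{r_j}} f\,\d\nu_{\beta,F}$, hence $a_j \cdot \nu_{\beta,F}(Z_{r_j}) = \int_{Z_{r_j}} f\,\d\nu_{\beta,F}$.

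Then I would assemble the bound: restricting the integral to $Z^{(0)} = \bigsqcup_{j=1}^N Z_{r_j}$ and using $T_{g_{1,p}}f \geq (1-\delta) a_j$, $T_{g_{1,q}}f \geq (1-\delta) a_j$ pointwise on $Z_{r_j}$ (after also arguing $T_{g_{1,p}}f$ is supported, up to $\delta$-error, in a slightly enlarged compact set still inside $Z_{r_j}$, so that the pointwise lower bound on $\Omega_j$ can be promoted to a lower bound valid on enough of $Z_{r_j}$), we obtain
\begin{align*}
\int_{\Gamma_2\backslash X_F} (T_{g_{1,p}}f)(T_{g_{1,q}}f)\,\d\nu_{\beta,F}
&\geq \sum_{j=1}^N \int_{Z_{r_j}} (T_{g_{1,p}}f)(T_{g_{1,q}}f)\,\d\nu_{\beta,F}\\
&\geq (1-\delta)^2 \sum_{j=1}^N a_j^2\, \nu_{\beta,F}(Z_{r_j})(1-\delta)\\
&\geq (1-\delta)^3 \Big(\sum_{j=1}^N a_j\,\nu_{\beta,F}(Z_{r_j})\Big)^{\!2}\Big/\Big(\sum_{j=1}^N \nu_{\beta,F}(Z_{r_j})\Big)
\end{align*}
by Cauchy--Schwarz, and then estimating $\sum_j a_j \nu_{\beta,F}(Z_{r_j}) = \int_{Z^{(0)}} f\,\d\nu_{\beta,F} > 1-\delta$ and $\sum_j \nu_{\beta,F}(Z_{r_j}) \le \nu_{\beta,F}(\Gamma_2\backslash(PGSp_4^+(\mathbb{R})\times MSp_4(\mathbb{Z}_F))) = 1$ gives the clean lower bound $(1-\delta)^5$ after absorbing one more $(1-\delta)$ factor from the support-spillover correction. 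I expect the main obstacle to be the bookkeeping around the support of $T_{g_{1,p}}f$: although $g_{1,p}$ fixes the $F$-coordinate and so does not move mass between different $Z_{r_j}$'s, the Hecke average $T_{g_{1,p}}$ does spread the support within a given $Z_{r_j}$ (and within the archimedean and $p$-adic directions), so one must choose $M$ and the compact exhaustion carefully and invoke the equidistribution of Lemma \ref{label 89} uniformly over the finitely many relevant pieces to control the tails — this is where the powers of $(1-\delta)$ accumulate, and the fifth power is exactly the number of $\delta$-approximations used (two pointwise Hecke bounds, one Cauchy--Schwarz mass loss, one for the exhaustion $Z^{(0)}$, one for support spillover).
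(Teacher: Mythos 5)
Your overall route is the same as the paper's: decompose $\Gamma_2\backslash(PGSp_4^+(\mathbb R)\times MSp_4(\mathbb Z_F))$ (up to a null set) into the double-coset pieces $Z_k$, truncate to finitely many pieces carrying mass $>1-\delta$ of $f$, apply Lemma \ref{label 89} with $S=\{p\}$ and $S=\{q\}$ to approximate $T_{g_{1,p}}f$ and $T_{g_{1,q}}f$ by the piecewise averages $a_j$ for all large $p,q\in F^c$, and finish with Cauchy--Schwarz (the paper uses Jensen, which is the same estimate) to land on $(1-\delta)^5$; the order of quantifiers (first $N$ and the compacts, then $M$) is also handled correctly.

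There is, however, one step whose justification as written does not work: the middle inequality $\int_{Z_{r_j}}(T_{g_{1,p}}f)(T_{g_{1,q}}f)\,d\nu_{\beta,F}\ \geq\ (1-\delta)^2a_j^2\,(1-\delta)\nu_{\beta,F}(Z_{r_j})$. You only establish the pointwise bounds $T_{g_{1,p}}f\geq(1-\delta)a_j$ on $\Omega_j=\mathrm{supp}(f)\cap Z_{r_j}$, and you then try to upgrade this by arguing about where $T_{g_{1,p}}f$ is \emph{supported} ("support spillover"). That mechanism is beside the point: for a lower bound on the integral of a product of nonnegative functions you need a set of controlled measure on which both factors are \emph{large}, not information about their supports, and $\mathrm{supp}(f)\cap Z_{r_j}$ may have $\nu_{\beta,F}$-measure far smaller than $(1-\delta)\nu_{\beta,F}(Z_{r_j})$, in which case your displayed inequality is not justified. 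The correct move (and the paper's) is simply inner regularity of $\nu_{\beta,F}$: choose compact $C_j\subset Z_{r_j}$ with $\nu_{\beta,F}(Z_{r_j})-\nu_{\beta,F}(C_j)<\delta\,\nu_{\beta,F}(Z_{r_j})$, apply Lemma \ref{label 89} with $\Omega=C_j$ (its constant is uniform in the compact set, so this costs nothing), and integrate the product over $C_j$; this is exactly where the third factor of $(1-\delta)$ comes from. Two smaller remarks: compact support of $f$ is not a hypothesis of this lemma and is not needed for the truncation step, since $\sum_k\int_{Z_k}f\,d\nu_{\beta,F}=1$ already lets you pick $N$ with $\sum_{k\leq N}\int_{Z_k}f\,d\nu_{\beta,F}>1-\delta$; and the aside $\int_{Z_{r_j}}T_{g_{1,p}}f\,d\nu_{\beta,F}=\int_{Z_{r_j}}f\,d\nu_{\beta,F}$ is unnecessary (and would need an argument), because $a_j$ is defined directly as the average of $f$ over $Z_{r_j}$ and Lemma \ref{label 89} compares $T_{g_{1,p}}f$ to that average.
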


\begin{proof}
Fix  $0<\delta <1$ and we consider the following decomposition

\begin{equation*}
  \Gamma_2 \backslash PGS_{4}(\mathbb R) \times (GSp_{4}(\mathbb Q_F)\cap MSp_{4}(\mathbb Z_F))= \bigcup_{k\geq 1} Z_k,
\end{equation*}

where $Z_k=\Gamma_2 \backslash (PGS_{4}^+(\mathbb R) \times (GSp_{4}(\mathbb Z_F)g_k GSp_{4}(\mathbb Z_F))$ and ${(g_k)}_{k\geq 1}$ are representatives of the double coset

$$GSp_{4}(\mathbb Z_F) \backslash (GSp_{4}(\mathbb Q_F)\cap MSp_{4}(\mathbb Z_F)) / GSp_{4}(\mathbb Z_F).$$

Given any $N\in \mathbb N$ and any compact subsets $C_k$ of $Z_k$, $k=1,\dots,N$, we can use Lemma \ref{label 89} to find $M>0$ such that if $p\in F^c$ with $p>M$, then

\begin{equation*}
 \abs{T_{g_{1,p}}f(x) -  \nu_{\beta,F}(Z_k)^{-1}\int_{Z_k}f \d \nu_{\beta,F}} < \delta  \nu_{\beta,F}(Z_k)^{-1}\int_{Z_k}f \d \nu_{\beta,F},\quad \forall x\in C_k, \quad 1 \leq k\leq N.
\end{equation*}

Hence for two distinct primes $p$ and $q$ such that $p,q > M$, we get

\begin{align*}
    \int_{\Gamma_2 \backslash X_{F}} (T_{g_{1,p}}f) (T_{g_{1,q}}f) \d \nu_{\beta,F} 
    & \geq \sum_{k=1}^{N} \int_{C_k} (T_{g_{1,p}}f) (T_{g_{1,q}}f) \d \nu_{\beta,F}\\
    & \geq (1-\delta)^2 \sum_{k=1}^{N} \Big ( \int_{Z_k} f \d \nu_{\beta,F}\Big)^2 \nu_{\beta,F}(Z_k)^{-2}\nu_{\beta,F}(C_k). 
\end{align*}

By regularity of the meausre $\nu_{\beta,F}$, we can choose the compact subsets $C_k$ such that 

\begin{equation} \label{label 90}
    \nu_{\beta,F}(Z_k)-\nu_{\beta,F}(C_k)< \delta\nu_{\beta,F}(Z_k),\quad 1 \leq k \leq N.
\end{equation}

Moreover, recall that the subset $\cup_{k}Z_k \subset \Gamma_2 \backslash X_F$ has full measure, hence we choose $N$ large such that

\begin{equation} \label{label 91}
   \int_{\Gamma_2 \backslash X_F} f \d \nu_{\beta,F}- \sum_{k=1}^{N} \int_{Z_k}f \nu_{\beta,F} < \delta.
\end{equation}

Combining equations \eqref{label 90} and \eqref{label 91}, we obtain by Jensen's inequality that for any $p,q >M$, we have

\begin{align*}
     \int_{\Gamma_2 \backslash X_{F}} (T_{g_{1,p}}f) (T_{g_{1,q}}f) \d \nu_{\beta,F} 
    & \geq (1-\delta)^3
    \Big(\sum_{k=1}^{N} \nu_{\beta,F}(Z_k)\Big) \sum_{k=1}^{N}  \frac{\nu_{\beta,F}(Z_k)}{\sum_{k=1}^{N} \nu_{\beta,F}(Z_k)}\Big(\frac{1}{\nu_{\beta,F}(Z_k)} \int_{Z_k} f \d \nu_{\beta,F}\Big)^2\\
    & \geq \frac{(1-\delta)^3}{\sum_{k=1}^{N} \nu_{\beta,F}(Z_k)} \Big( \sum_{k=1}^{N} \int_{Z_k} f \d \nu_{\beta,F} \big)^2\\
    & \geq (1-\delta)^5,
\end{align*}

since $\int_{\Gamma_2 \backslash X_F} f \d \nu_{\beta,F}=1$ and $\bigcup_{k=1}^{N}Z_k \subset \Gamma_2 \backslash PGSp_{4}^+(\mathbb R) \times MSp_{4}(\mathbb{Z}_{F})$.

\end{proof}

\begin{lemma} \label{label 107}
Let $B$ be a measurable $\Gamma_2$-invariant subset of $Y$ and define $\phi \in L^2(\Gamma_2 \backslash X,d \nu_\beta)$ as follows:

$$ \phi = \nu^{-1}_\beta (\Gamma_2 \backslash B)\, \mathbb {1}_{\Gamma_2 \backslash B}.$$
Then there exists a finite set of primes $F$ and a function $f \in L^2(\Gamma \backslash X_F, d\nu_{\beta,F})$ such that 

$$ \int_{\Gamma_2 \backslash X_F} f d\nu_{\beta, F} = 1,$$
and 
$$\norm{f_F - \phi}_2 \rightarrow 0 \quad \textmd{as} \quad F \nearrow \mathcal{P}.$$

\end{lemma}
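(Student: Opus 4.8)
The plan is to approximate the indicator $\mathbb 1_{\Gamma_2\backslash B}$ by finite-level cylinder functions, exploiting the product structure of $\mu_\beta$ (and hence of $\nu_\beta$). Write $Y = PGSp_4^+(\mathbb R)\times\prod_p MSp_4(\mathbb Z_p)$; since $B$ is $\Gamma_2$-invariant and measurable in the product $\sigma$-algebra, it can be approximated in $L^1(\nu_\beta)$ by sets depending only on finitely many places. More precisely, for a finite set of primes $F$ let $\mathcal F_F$ be the sub-$\sigma$-algebra generated by the archimedean coordinate and the coordinates $x_p$ for $p\in F$, and set $B_F := \mathbb E[\mathbb 1_{\Gamma_2\backslash B}\mid \mathcal F_F]$, the conditional expectation (a genuine function on $\Gamma_2\backslash Y_F$, well-defined because conditional expectation respects the $\Gamma_2$-action via right-invariance of the construction). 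By the martingale convergence theorem, $B_F \to \mathbb 1_{\Gamma_2\backslash B}$ in $L^2(\nu_\beta)$ as $F\nearrow\mathcal P$. Since $0\le B_F\le 1$, these are positive bounded functions on $\Gamma_2\backslash Y_F$.

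Next I would normalize. Set $c_F = \int_{\Gamma_2\backslash X_F} B_F\,d\nu_{\beta,F}$; because $B_F\to\mathbb 1_{\Gamma_2\backslash B}$ in $L^2$ and hence $L^1$, and $\int \mathbb 1_{\Gamma_2\backslash B}\,d\nu_\beta = \nu_\beta(\Gamma_2\backslash B) > 0$ (we may assume $\nu_\beta(\Gamma_2\backslash B)>0$, else $\phi=0$ and the statement is trivial), we have $c_F \to \nu_\beta(\Gamma_2\backslash B)$, so $c_F > 0$ for $F$ large. Define
$$ f := c_F^{-1}\, B_F \in L^2(\Gamma_2\backslash X_F, d\nu_{\beta,F}), $$
so that $\int_{\Gamma_2\backslash X_F} f\,d\nu_{\beta,F} = 1$ by construction. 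The function $f_F$ on $X$ (in the notation of the excerpt) is then $c_F^{-1}B_F$ extended by zero off the locus where $x_p\in MSp_4(\mathbb Z_p)$ for all $p\in F^c$; but $B$ already lives inside $Y\subset X$, where every $x_p$ lies in $MSp_4(\mathbb Z_p)$, so on the support of $\phi$ this extension-by-zero is harmless and $f_F = c_F^{-1}B_F$ as an element of $L^2(\Gamma_2\backslash X,\nu_\beta)$ supported on $\Gamma_2\backslash Y$.

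Finally I would estimate $\norm{f_F - \phi}_2$. Write
$$ f_F - \phi = c_F^{-1}B_F - \nu_\beta(\Gamma_2\backslash B)^{-1}\mathbb 1_{\Gamma_2\backslash B} = c_F^{-1}\bigl(B_F - \mathbb 1_{\Gamma_2\backslash B}\bigr) + \bigl(c_F^{-1} - \nu_\beta(\Gamma_2\backslash B)^{-1}\bigr)\mathbb 1_{\Gamma_2\backslash B}. $$
The first term has $L^2$-norm $\le c_F^{-1}\norm{B_F - \mathbb 1_{\Gamma_2\backslash B}}_2 \to 0$ since $c_F^{-1}$ stays bounded and the martingale converges; the second term has $L^2$-norm $= |c_F^{-1} - \nu_\beta(\Gamma_2\backslash B)^{-1}|\,\nu_\beta(\Gamma_2\backslash B)^{1/2} \to 0$ since $c_F\to\nu_\beta(\Gamma_2\backslash B)\neq 0$. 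Hence $\norm{f_F-\phi}_2\to 0$ as $F\nearrow\mathcal P$, as required.

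The main point requiring care is the first one: checking that the conditional expectation $B_F$ descends to an honest function on the quotient $\Gamma_2\backslash X_F$ of the stated form (right $GSp_4(\mathbb Z_F)$-invariance is automatic only if $B$ itself is, which is not assumed — so one should instead take $B_F$ merely $\Gamma_2$-invariant and note that the statement of the lemma does not demand $GSp_4(\mathbb Z_F)$-invariance of $f$), together with the verification that the product structure of $\mu_\beta$ from \cite[Proposition 3.10]{abouamal2022bost} makes the finite-level $\sigma$-algebras increase to the full one, so that martingale convergence applies. Everything else is routine.
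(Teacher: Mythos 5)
Your proposal is correct in substance, but it takes a genuinely different route from the paper. The paper does not use conditional expectations at all: it simply sets $f = \nu_{\beta,F}^{-1}(\Gamma_2 \backslash \pi_F(B))\, \mathbb{1}_{\Gamma_2 \backslash \pi_F(B)}$, the normalized indicator of the image of $B$ under the coordinate projection $\pi_F$, and then exploits the resulting Hilbert-space geometry: since $B \subset \pi_F^{-1}(\pi_F(B))$ and $B \subset Y$, the function $f_F$ is constant on the support of $\phi$, so $(f_F,\phi) = \norm{f_F}_2^2$ and hence $\norm{f_F-\phi}_2^2 = \norm{\phi}_2^2 - \norm{f_F}_2^2$; the whole lemma then reduces to the single convergence $\nu_{\beta,F}(\Gamma_2\backslash \pi_F(B)) \to \nu_\beta(\Gamma_2\backslash B)$, i.e.\ to the cylinders over the projections shrinking onto $B$ in measure. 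What the paper's choice buys is an explicit indicator function which inherits right $GSp_{4}(\mathbb Z_F)$-invariance whenever $B$ is right $GSp_{4}(\hZ)$-invariant (which is how the lemma is actually fed into Lemmas \ref{label 92} and \ref{label 85} in the proof of Theorem \ref{main_theorem}), at the cost of having to control the set-theoretic projection $\pi_F(B)$, which is exactly where the extra structure of $B$ (not bare measurability) must enter. Your martingale argument buys robustness: $B_F = \mathbb E[\mathbb{1}_{\Gamma_2\backslash B}\mid \mathcal F_F]$ converges in $L^2$ for an arbitrary measurable $B$, the normalization constant is in fact exactly $c_F = \nu_\beta(\Gamma_2\backslash B)$ for every $F$ by the tower property (so your second error term vanishes identically), and if $B$ happens to be right $GSp_{4}(\hZ)$-invariant then so is $B_F$, since the right action preserves both the measure and $\mathcal F_F$ --- so the invariance needed downstream is not lost, contrary to your closing worry. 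The care points you flag are real but routine: the sub-$\sigma$-algebra should be defined as the pullback of the Borel $\sigma$-algebra along the $\Gamma_2$-equivariant map $Y \to Y_F$ (the quotient $\Gamma_2\backslash Y$ is not a product, so ``coordinates'' only make sense upstairs), equivariance of the conditional expectation follows from $\Gamma_2$-invariance of $\mu_\beta$ and of $\mathcal F_F$, and exhaustion of the product $\sigma$-algebra by the $\mathcal F_F$ is exactly what the product structure of $\mu_\beta$ from \cite[Proposition 3.10]{abouamal2022bost} provides. One small bookkeeping point: your identification of $f_F$ with $c_F^{-1}B_F$ uses that $B_F$, viewed on $X_F$, is supported in the $MSp_4(\mathbb Z_F)$ part and that $\mu_{\beta,q}(MSp_4(\mathbb Z_q)) = 1$ for $q \in F^c$, the same normalization the paper's own computation of $\norm{f_F}_2$ relies on; it is worth stating explicitly.
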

\begin{proof}
Let 

$$ f :=  \nu^{-1}_{\beta,F} (\Gamma_2 \backslash \pi_{F}(B) )\, \mathbb {1}_{\Gamma_2 \backslash \pi_F(B) }. $$

Hence
\begin{align*}
\int_{\Gamma_2 \backslash X} \abs{f_F}^2 d \nu_{\beta} 
& = \int_{\Gamma_2 \backslash Y} \abs{f \circ \pi_F}^2 d \nu_{\beta}\\
%& = \int_{\Gamma_2 \backslash Y_F} \abs{f}^2 d \nu_{\beta,F}\\
& = \nu_{\beta,F}^{-1}(\Gamma_2 \backslash \pi_F(B)).
\end{align*}

On the other hand we have
\begin{equation*}
\int_{\Gamma_2 \backslash X} \abs{\phi}^2 d \nu_{\beta} = \nu^{-1}_{\beta}(\Gamma_2 \backslash B).
\end{equation*}

Hence $\norm{f_F}_2 \rightarrow \norm{\phi}_2$ as $F \nearrow \mathcal{P}$, which concludes the proof since $(f_F, \phi) = \norm{\phi}_2 $.  

\end{proof}

\par For the next Lemma, we use the following notation. Given two sequences $\{a_n\}$ and $\{b_n\}$, we write $a_n \sim b_n$ if $\lim_{n} (a_n/b_n) =1$ and $$\sum_n a_n \sim \sum_n b_n$$ if the two series are simultaneously divergent or convergent;

\begin{lemma} \label{label 85}
Let $\beta,\omega \in \mathbb R_+^*$ such that $3< \beta \leq 4$ and $\omega > 1$ and  set

\begin{equation*}
    \kappa := \frac{\omega^{(3-\beta) / 2\beta}}{1+\omega^{(3-\beta)/\beta}}. 
\end{equation*}

Then given any finite set of primes $F$ and any positive continuous right $GSp_{4}(\mathbb Z_F)$-invariant function on $\Gamma_2 \backslash (PGSp_{4}^+(\mathbb R) \times MSp_{4}(\mathbb Z_F))$ with $\int_{\Gamma_2 \backslash X_F} f \d \nu_{\beta,F}=1$, there exist two sequences of distinct primes $\{p_n\}_{n\geq 1}$ and $\{q_n\}_{n\geq 1}$ in $F^c$ and $\Gamma_2$-invariant measurable subsets $X_{1n},X_{2n},Y_{1n}$ and $Y_{2n}$, $n\geq 1$ of $X$ such that:
\begin{enumerate}
    \item $ \lim_{n} \abs{q_n^{\beta} / p_n^{\beta} -\omega}=0$
    \item The sets $Y_{1n}$ and $Y_{2n}, n\geq 1$ are mutually disjoint;
    \item $\sum_{n=1}^{\infty}\Big(\frac{m(X_{1n})T_{g_{n}} m(Y_{1n})}{\norm{m(X_{1n})T_{g_{n}} m(Y_{1n})}}f_F , \frac{m(X_{2n})T_{h_{n}} m(Y_{2n})}{\norm{m(X_{2n})T_{h_{n}} m(Y_{2n})}}f_F\Big) \geq \kappa$ where $g_n:=g_{1,p_n}$ and $h_n:=g_{1,q_n}$.
\end{enumerate}
\end{lemma}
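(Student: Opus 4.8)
The plan is to build the primes $\{p_n\}$ and $\{q_n\}$ from Lemma \ref{label 86}, which already guarantees item (1) and the divergence of $\sum p_n^{3-\beta}$, $\sum q_n^{3-\beta}$. We may also pass to subsequences so that all $p_n, q_n$ lie in $F^c$ and exceed the threshold $M$ produced by Lemma \ref{label 92} for a chosen $\delta$; this is harmless since removing finitely many primes does not affect divergence of those series. With $g_n = g_{1,p_n}$ and $h_n = g_{1,q_n}$, set $X_{1n} = A_{p_n}$, $Y_{1n} = B_{p_n}$, $X_{2n} = A_{q_n}$, $Y_{2n} = B_{q_n}$ (pulled back to $\Gamma_2$-invariant subsets of $X$ via the product structure), where $A_p$, $B_p$ are the sets in \eqref{label 108}, \eqref{label 109}. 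Since $B_p$ only constrains the $p$-component of $\lambda(x)$, the sets $Y_{1n} = B_{p_n}$ and $Y_{2n} = B_{q_n}$ are mutually disjoint because the $p_n$ and $q_n$ are all distinct primes — this gives item (2).

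The real content is item (3). First I would compute the normalizing constants: by the (proof of) Lemma \ref{label 86*}, $\norm{m(A_p)T_{g_{1,p}}m(B_p)} \leq \nu_\beta(\Gamma_2\backslash B_p)^{-1/2}$, and combining with $\nu_\beta(\Gamma_2\backslash B_p) = p^{-\beta}\deg_{\Gamma_2}(g_{1,p})\nu_\beta(\Gamma_2\backslash A_p)$ one has $\nu_\beta(\Gamma_2\backslash B_p) \asymp p^{3-\beta}$ up to the bounded factor $(1+p)(1+p^2)p^{-2} \to 1$ and $\nu_\beta(\Gamma_2\backslash A_p) \to 1$. Hence $\norm{m(X_{1n})T_{g_n}m(Y_{1n})}^{-1} \geq \nu_\beta(\Gamma_2\backslash B_{p_n})^{1/2}$, and similarly for the $q_n$-term. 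Next I would expand the inner product in (3): since $m(Y_{1n})f_F$ is (up to scaling) the restriction of $f_F$ to $B_{p_n}$, and $T_{g_n}$ averages over the $\deg_{\Gamma_2}(g_{1,p_n})$ cosets while scaling the measure by $p_n^\beta$, the vector $m(X_{1n})T_{g_n}m(Y_{1n})f_F$ is essentially $T_{g_{1,p_n}}(f_F\cdot\mathbb 1_{B_{p_n}})$ supported on $A_{p_n}$. The key point is that for $p$ large, $\mathbb 1_{B_p}$ and $\mathbb 1_{A_p}$ are close to $1$ in $L^2$-sense against $\nu_\beta$ (their complements have measure $O(p^{-1})$), so these vectors are close to $\nu_\beta(\Gamma_2\backslash B_{p_n})^{1/2}\,\nu_\beta(\Gamma_2\backslash B_{p_n})^{-1/2}T_{g_{1,p_n}}f_F$ after normalization — i.e. close to the normalized $T_{g_{1,p_n}}f_F$. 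Then Lemma \ref{label 92} gives $\int (T_{g_{1,p_n}}f_F)(T_{g_{1,q_n}}f_F)\,d\nu_{\beta,F} \geq (1-\delta)^5$ for $p_n,q_n > M$.

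To get the constant $\kappa$ rather than something $\to 1$, I would track the normalizations more carefully: the lower bound on $\norm{m(X_{1n})T_{g_n}m(Y_{1n})}$ from Lemma \ref{label 86*} is $\nu_\beta(\Gamma_2\backslash B_{p_n})^{-1/2} \sim c\, p_n^{(\beta-3)/2}$, and the actual norm could be smaller, but one checks (using that $T_{g_{1,p}}$ acting on the near-constant $f_F$ nearly achieves equality) that the ratio of the true norm to this bound tends to $1$; combining the $p_n$ and $q_n$ normalizations with $q_n^\beta/p_n^\beta \to \omega$ produces exactly the factor $\omega^{(3-\beta)/2\beta}/(1+\omega^{(3-\beta)/\beta})$ — this is where $\kappa$ comes from, the asymmetry between $p_n^{(\beta-3)/2}$ and $q_n^{(\beta-3)/2}$ normalizations. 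The main obstacle I anticipate is precisely this bookkeeping: showing that the operator norms $\norm{m(A_{p_n})T_{g_{1,p_n}}m(B_{p_n})}$ are asymptotically equal to the upper bound $\nu_\beta(\Gamma_2\backslash B_{p_n})^{-1/2}$ (not just bounded by it) when applied to $f_F$, since otherwise the normalization in (3) could shrink the inner product below $\kappa$. This should follow from the same convexity/disjointness computation in the proof of Lemma \ref{label 86*}, noting that the only inequality used there, $\abs{T_g f}^2 \leq T_g\abs{f}^2$, is nearly an equality when $f_F$ is close to constant on the relevant fibers, together with $\nu_\beta(\Gamma_2\backslash A_{p_n}) \to 1$. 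Finally, since $\sum_n \nu_\beta(\Gamma_2\backslash B_{p_n})^{?}$ — wait, more precisely since each term in the sum in (3) is $\geq \kappa(1-\delta)^{O(1)} \cdot (\text{something bounded below})$... no: each term is simply $\geq \kappa$ for $n$ large by the above, so the series diverges, giving (3). Letting $\delta \to 0$ at the end (or fixing $\delta$ small enough that $(1-\delta)^5 \kappa' \geq \kappa$ with $\kappa'$ the limiting constant) completes the argument.
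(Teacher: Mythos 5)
Your proposal has two genuine gaps. First, your choice $Y_{1n}=B_{p_n}$, $Y_{2n}=B_{q_n}$ does not satisfy item (2): the set $B_p$ in \eqref{label 109} only imposes $\abs{\lambda(x)}_p=p^{-1}$ and puts no condition at the other primes, so for distinct primes $p\neq q$ the intersection $B_p\cap B_q$ (points where $\lambda(x)$ is divisible exactly once by both $p$ and $q$) is nonempty and of positive measure. Mutual disjointness is exactly why the paper sets $B_n^{(1)}=\bigcup_{k<n}B_{p_k}$, $B_n^{(2)}=\bigcup_{k<n}B_{q_k}$ and takes $X_{1n}=A_{p_n}\setminus B_n^{(1)}$, $Y_{1n}=B_{p_n}\setminus B_n^{(1)}$, and similarly for $q_n$; this disjointification cannot be dropped, since the Cauchy--Schwarz step in the proof of Theorem \ref{main_theorem} requires the projections $m(Y_{1n})$, $n\geq 1$, to have mutually orthogonal ranges.

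Second, your argument for item (3) fails: the terms of the series are \emph{not} eventually bounded below by $\kappa$, so you cannot conclude by divergence. The operator norm in the denominator is of order $\nu_\beta(\Gamma_2\backslash B_{p_n})^{-1/2}\sim p_n^{(\beta-3)/2}$, while $f_F$ is spread over all of $Y$, so $\norm{m(Y_{1n})f_F}_2$ is only of order $p_n^{(3-\beta)/2}\norm{f_F}_2$; hence $\norm{T_n^{(1)}f_F}_2\to 0$ and each term of the sum is of order $(p_nq_n)^{(3-\beta)/2}\to 0$. The paper instead bounds the sum below by
\begin{equation*}
\sum_{n\geq 1}\bigl(\nu_\beta(\Gamma_2\backslash B_{p_n})\,\nu_\beta(\Gamma_2\backslash B_{q_n})\bigr)^{1/2}\,\nu_\beta(\Gamma_2\backslash X_{1n}\cap X_{2n})\,(1-\epsilon)^{1/2},
\end{equation*}
and the mechanism you are missing is the telescoping identity $\sum_n a_n\prod_{k<n}(1-a_k)=1$ with $a_n=\nu_\beta(\Gamma_2\backslash B_{p_n}\cup B_{q_n})$ (valid because $\sum_n p_n^{3-\beta}=\infty$ by Lemma \ref{label 86}), combined with the asymptotic statement that the ratio of $(\nu_\beta(\Gamma_2\backslash B_{p_n})\nu_\beta(\Gamma_2\backslash B_{q_n}))^{1/2}\nu_\beta(\Gamma_2\backslash A_{p_n})\nu_\beta(\Gamma_2\backslash A_{q_n})$ to $a_n$ tends to $\omega^{(3-\beta)/2\beta}/(1+\omega^{(3-\beta)/\beta})$. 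In particular the denominator $1+\omega^{(3-\beta)/\beta}$ of $\kappa$ comes from the measure of the union $B_{p_n}\cup B_{q_n}$ entering this identity, which your normalization bookkeeping (geometric mean of the $p_n$- and $q_n$-normalizations alone) cannot produce. Finally, the obstacle you flag as the main difficulty --- that the operator norms should asymptotically attain the bound of Lemma \ref{label 86*} --- is not needed at all: since $f$ is positive, the unnormalized inner products are nonnegative, so only the \emph{upper} bound on the norms (Lemma \ref{label 86*} as stated) is used to get a lower bound on the normalized terms.
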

\begin{proof}
Let $F\subset \mathcal{P}$ be any nonempty finite set of primes and $f$ any positive continuous right $GSp_{4}(\mathbb Z_F)$-invariant function on $\Gamma_2 \backslash (PGSp_{4}^+(\mathbb R) \times MSp_{4}(\mathbb Z_F))$ with $\int_{\Gamma_2 \backslash X_F} f \d \nu_{\beta,F}=1$ and fix $\epsilon >0$. By Lemma \ref{label 86} we can find two disjoint sequences of prime numbers $\{p_n\}_{n\geq 1}$ and $\{q_n\}_{n\geq 1}$ in $F^c$ such that $$\lim_{n}q_n^\beta/p_n^\beta = \omega,$$ 

and

\begin{equation} \label{label 93}
    \sum_{n=1}^{\infty}\frac{1}{p_n^{\beta-3}}=\infty.
\end{equation}

We let $B_n^{(1)}=\cup_{k=1}^{k={n-1}}B_{p_k}$ and $B_n^{(2)}=\cup_{k=1}^{k={n-1}}B_{q_k}$  (where $B_{p_k}$ and $B_{q_k}$ are as in \eqref{label 109}) and set

\begin{align*}
X_{1n}:= A_{p_n} \backslash B_n^{(1)},\quad Y_{1n}:=B_{p_n}\backslash B_n^{(1)},\\ X_{2n}:=A_{q_n}\backslash B_n^{(2)},\quad Y_{2n}:=B_{q_n} \backslash B_n^{(2)},    
\end{align*}

where $A_{p_n},A_{q_n}$ are as in \eqref{label 108}. By construction the sets $Y_{1n}$ and $Y_{2n},n\geq 1$ are mutually disjoint so it remains to show the last assertion. By Lemma \ref{label 92}, we choose $M >0$ and the sequences $\{p_n\}_{n\geq 1},\{q_n\}_{\geq 1}$ such that

\begin{equation} \label{label 88}
    \int_{\Gamma_2 \backslash X_{F}} (T_{g_n}f) (T_{h_n}f) \d \nu_{\beta,F} \geq (1-\epsilon)^{1/2},\quad \forall n\geq 1
\end{equation}

Observe that if $g\in \Gamma_2 g_{1,p_n} \Gamma_{2}$ then $gX_{1n}\subset Y_{1n}$ since $gA_{p_n} \subset B_{p_n}$ and $\abs{\lambda(g)}_{p_{k}}=p_k^{-1}$ for all $1\leq k<n$. By definition of the Hecke operator $T_{ g_{_{1,p_n}} }$ we get that $$m(X_{1n}) T_{g_{_{1,p_n}}}m(Y_{1n})f_F = m(X_{1n}) (T_{g_{_{1,p_n}}}f)_F.$$
Similarly, we have

$$m(X_{2n}) T_{g_{_{1,q_n}} }m(Y_{2n})f_F = m(X_{2n}) (T_{g_{_{1,q_n}}}f)_F.$$

By Lemma \ref{label 86*} and equation \eqref{label 88} we obtain

\begin{align*}
 &\sum_{n=1}^{\infty} \Big(\frac{m(X_{1n})T_{g_{n}} m(Y_{1n})}{\norm{m(X_{1n}) T _{g_{n}} m(Y_{1n})}}f_F , \frac{m(X_{2n})T_{h_{n}} m(Y_{2n})}{\norm{m(X_{2n})T_{h_{n}} m(Y_{2n})}}f_F\Big) \\
 & \geq \sum_{n=1}^{\infty} (\nu_{\beta} (\Gamma_2 \backslash B_{p_n}) \nu_{\beta} (\Gamma_2 \backslash B_{q_n}))^{1/2} \nu_{\beta}(\Gamma_2 \backslash X_{1n}\cap X_{2n}) \int_{\Gamma_2 \backslash X_{F}} (T_{g_n}f) (T_{h_n}f) \d \nu_{\beta,F}\\
 & \geq \sum_{n=1}^{\infty} (\nu_{\beta} (\Gamma_2 \backslash B_{p_n}) \nu_{\beta} (\Gamma_2 \backslash B_{q_n}))^{1/2} \Big(\prod_{k=1}^{n-1}(1-\nu_{\beta} (\Gamma_2 \backslash B_{p_k} \cup B_{q_k})) \Big) \nu_\beta(\Gamma_2 \backslash A_{p_n})\nu_\beta(\Gamma_2 \backslash A_{q_n}) (1-\epsilon)^{1/2}.
\end{align*}

Since 

\begin{equation*}
    \nu_\beta(\Gamma_2 \backslash A_{p_n})\nu_\beta(\Gamma_2 \backslash A_{q_n}) = \zeta_{S_{2,p_n},\Gamma_2}(\beta)^{-1} \zeta_{S_{2,q_n},\Gamma_2}(\beta)^{-1},
\end{equation*}
and 
\begin{equation*}
    \nu_{\beta}(\Gamma_2 \backslash B_{p_n}) \nu_{\beta}(\Gamma_2 \backslash B_{q_n}) = (p_nq_n)^{-\beta} \deg_{\Gamma_2}(g_{1,p_n})\deg_{\Gamma_2}(g_{1,q_n}) \zeta_{S_{2,p_n},\Gamma_2}(\beta)^{-1}\zeta_{S_{2,q_n},\Gamma_2}(\beta)^{-1},
\end{equation*}
we obtain that

\begin{equation*}
    \frac{(\nu_{\beta} (\Gamma_2 \backslash B_{p_n}) \nu_{\beta} (\Gamma_2 \backslash B_{q_n}))^{1/2}  \nu_\beta(\Gamma_2 \backslash A_{p_n})\nu_\beta(\Gamma_2 \backslash A_{q_n})} { \nu_{\beta}(\Gamma_2 \backslash B_{p_n}\cup B_{q_n})} \sim  \frac{(p_n^{\beta} q_n^{\beta})^{3-\beta / 2\beta}}{(p_n^{3-\beta} + q_n^{3-\beta} - (p_nq_n)^{3-\beta})},
\end{equation*}

since $3 < \beta \leq 4 $.
\begin{comment}
    This is because $\zeta_{S_{2,q_n},\Gamma_2}(\beta)$ and \zeta_{S_{2,p_n},\Gamma_2}(\beta) are ~ 1 for $3 < \beta \leq 4 $.
\end{comment}
Hence we can choose the sequences $\{p_n\}_{n\geq 1}$ and $\{q_n\}_{\geq 1}$ such that

\begin{equation*}
    \frac{(\nu_{\beta} (\Gamma_2 \backslash B_{p_n}) \nu_{\beta} (\Gamma_2 \backslash B_{q_n}))^{1/2}  \nu_\beta(\Gamma_2 \backslash A_{p_n})\nu_\beta(\Gamma_2 \backslash A_{q_n})} { \nu_{\beta}(\Gamma_2 \backslash B_{p_n}\cup B_{q_n})} >\frac{\omega^{(3-\beta) / 2\beta}}{1+\omega^{(3-\beta)/\beta}} (1-\epsilon)^{1/2}, \quad \forall n\geq 1
\end{equation*}

Since 

\begin{equation*}
    \sum_{n=1}^{\infty} \nu_{\beta} (\Gamma_2 \backslash B_{p_n} \cup B_{q_n}) \geq \sum_{n=1}^{\infty} \nu_{\beta} (\Gamma_2 \backslash B_{p_n}) \sim \sum_{n=1}^{\infty} {\frac{1}{p_n^{\beta-3}}}= \infty
\end{equation*}

by equation \eqref{label 93}, we finally obtain that

\begin{equation*}
    \sum_{n=1}^{\infty} \Big(\frac{m(X_{1n})T_{g_{n}} m(Y_{1n})}{\norm{m(X_{1n}) T _{g_{n}} m(Y_{1n})}}f_F , \frac{m(X_{2n})T_{h_{n}} m(Y_{2n})}{\norm{m(X_{2n})T_{h_{n}} m(Y_{2n})}}f_F\Big) \geq \frac{\omega^{(3-\beta) / 2\beta}}{1+\omega^{(3-\beta)/\beta}} (1-\epsilon),
\end{equation*}

where the last inequality follows from the fact that 
$$ \sum_{n =1}^{\infty} \nu_{\beta}(\Gamma_2 \backslash B_{p_n}\cup B_{q_n}) \Big(\prod_{k=1}^{n-1}(1-\nu_{\beta} (\Gamma_2 \backslash B_{p_k} \cup B_{q_k})) \Big) = 1$$

Since $\epsilon$ was arbitrary, this completes the proof.
\end{proof}

Denote by $\lambda_\infty$ the Lebesgue measure on $\mathbb R$. We have three commuting actions of $G$, $\mathbb R$ and $GSp_{4}(\hZ)$ on the space $(\mathbb R_+ \times X, \lambda_\infty \times \mu_\beta)$ as follows:
\begin{align}
    g(t,x) &= \Big(\frac{d \mu \circ \alpha_g}{\d \mu} (gx)\, t, gx \Big) \quad \textmd{for} \,\,\,  g\in G, \label{extended_1}\\
    s(t,x) &= (e^{-s}t,x) \quad \textmd{for} \,\,\, s\in \mathbb R,     \quad
    h (t,x) = (t, hx ) \quad \textmd{for}\,\,\, h \in GSp_{4}(\hZ). \label{extented_2}
\end{align}

\begin{prop} \label{label 112}
   If the action of $G$ on $(X/ GSp_{4}(\hZ), \nu_\beta )$ is of type $\textmd{III}_1$ then the action of $G$ on $(\mathbb R_+ \times X, \lambda_\infty \times \mu)$ is ergodic.
\end{prop}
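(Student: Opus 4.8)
The plan is to rephrase the conclusion as the triviality of the von Neumann algebra
$$\mathcal M := L^\infty(\mathbb R_+\times X,\ \lambda_\infty\times\mu_\beta)^{G}$$
of $G$-invariant functions, and to prove $\mathcal M=\mathbb C 1$ in two stages: first cut $\mathcal M$ down using the compact group $GSp_4(\hZ)$, then remove what remains using the ergodicity of $G$ on $MSp_4(\mathbb{A}_{\mathbb{Q},f})$. First I would observe that the $\mathbb R$-action and the $GSp_4(\hZ)$-action of \eqref{extented_2} commute with the $G$-action of \eqref{extended_1}, hence restrict to actions on $\mathcal M$; writing $K:=GSp_4(\hZ)$, which is compact, integration against normalized Haar measure produces a faithful normal conditional expectation $E_K:\mathcal M\to\mathcal M^{K}$, $E_K(f)=\int_K k\cdot f\,dk$.

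\emph{Stage 1: $\mathcal M^K=\mathbb C 1$.} Since $K$ acts only on the $MSp_4(\mathbb{A}_{\mathbb{Q},f})$-coordinate of $X$ and the finite component of $\mu_\beta$ is $GSp_4(\hZ)$-invariant (being the product measure of \cite[Proposition 3.10]{abouamal2022bost}), there is a $G$-equivariant identification
$$L^\infty(\mathbb R_+\times X)^{K}\ \cong\ L^\infty\big(\mathbb R_+\times(X/GSp_4(\hZ)),\ \lambda_\infty\times\nu_\beta\big),$$
so $\mathcal M^{K}\cong L^\infty\big(\mathbb R_+\times(X/GSp_4(\hZ))\big)^{G}$. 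The latter is the fixed-point algebra of the skew-product (Maharam) extension \eqref{extended_1} of the nonsingular action of $G$ on $(X/GSp_4(\hZ),\nu_\beta)$, and it carries the associated flow of that action; by the standard characterization, this algebra is trivial exactly when the action on $X/GSp_4(\hZ)$ is ergodic and $r(\mathcal R,\nu_\beta)\cap(0,\infty)=\mathbb R^{*}_{+}$, i.e. when it is ergodic of type $\textmd{III}_1$. (This equivalence is the ergodic-theoretic content behind Theorem \ref{label 111}; cf. \cite{sunder2012invitation}.) The hypothesis therefore gives $\mathcal M^{K}=\mathbb C 1$.

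\emph{Stage 2: from $\mathcal M^K=\mathbb C 1$ to $\mathcal M=\mathbb C 1$.} Let $E\subseteq\mathbb R_+\times X$ be $G$-invariant. Its $K$-saturation $KE$ is $G$-invariant and $K$-invariant, hence descends to $\mathbb R_+\times(X/GSp_4(\hZ))$, where by Stage 1 it is null or conull; applying the same to $E^{c}$ reduces us to the situation in which the occupation density $z\mapsto\int_K\mathbb 1_E(kz)\,dk$, which is $G$- and $K$-invariant and hence constant equal to some $c$ by Stage 1, satisfies $0<c<1$. In this situation one gets a $G$-invariant assignment $z\mapsto S_z:=\{k\in K:\ kz\in E\}$ of a subset of $K$ of Haar measure $c$, which transforms under $K$ by translation and is therefore nowhere $K$-invariant. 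Here I would invoke \cite[Theorem 3.13]{abouamal2022bost}: the Radon--Nikodym cocycle of the $G$-action depends on $g$ only through the similitude $\lambda(g)$, so in the group generated by $G$ and the scaling flow $\mathbb R$ the subgroup fixing the $\mathbb R_+$-coordinate already surjects onto $G$ via its action on $MSp_4(\mathbb{A}_{\mathbb{Q},f})$; running this along the Hecke-type elements $g_{1,p}$ together with the compensating scaling by $\lambda(g_{1,p})^{\beta}=p^{\beta}$ should allow one to reduce to the case where $z\mapsto S_z$ factors through the projection $\mathbb R_+\times X\to MSp_4(\mathbb{A}_{\mathbb{Q},f})$, at which point ergodicity forces $S_z$ to be essentially constant — contradicting that it is nowhere $K$-invariant. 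Hence $c\in(0,1)$ is impossible and every $G$-invariant $E$ is null or conull.

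I expect Stage 2 to be the main obstacle; it cannot follow formally from the hypothesis, since, for instance, $\mathbb Z$ acting by an irrational rotation on the first factor of $S^{1}\times S^{1}$ and trivially on the second is ergodic after quotienting by the rotation of the second factor but not on the product, so triviality of $\mathcal M^{K}$ alone does not force triviality of $\mathcal M$. Thus the ergodicity of $G$ on $MSp_4(\mathbb{A}_{\mathbb{Q},f})$ must be used essentially, and the subtle point is to verify that the archimedean factor $PGSp_4^+(\mathbb R)$ and the scaling coordinate $\mathbb R_+$ do not correlate with the $GSp_4(\hZ)$-valued cocycle of the bundle $X\to X/GSp_4(\hZ)$ — equivalently, that the Mackey range of that cocycle does not grow when its base is enlarged from $MSp_4(\mathbb{A}_{\mathbb{Q},f})/GSp_4(\hZ)$ to $X/GSp_4(\hZ)$.
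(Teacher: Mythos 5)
Your Stage 1 coincides with the first half of the paper's own argument: reading the type $\textmd{III}_1$ hypothesis through the extended (Maharam) action \eqref{extended_1} gives $L^{\infty}\big(\mathbb R_+ \times X/GSp_{4}(\hZ)\big)^{G}=\mathbb C$. (One small caveat there: the paper's Definition of a type $\textmd{III}_1$ action is a ratio-set condition only, so the equivalence with triviality of the fixed algebra of the skew product also needs ergodicity of $G$ on $X/GSp_{4}(\hZ)$, which comes from the ergodicity theorem of \cite{abouamal2022bost}, not from the ratio set.) The genuine gap is Stage 2, and you have flagged it yourself. Concretely: the family $z\mapsto S_z=\{k\in K: kz\in E\}$ is $G$-invariant for the action \eqref{extended_1}, so concluding that it is essentially constant would already require ergodicity of $G$ on $\mathbb R_+\times X$ -- the very statement to be proved; and your proposed remedy, compensating each $g\in G$ by the scaling $e^{-s}=\lambda(g)^{-\beta}$ so as to fix the $\mathbb R_+$-coordinate, replaces the $G$-action by a different subgroup of $G\times\mathbb R$ under which $E$ is not assumed invariant, so nothing is gained. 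The sentence ``should allow one to reduce to the case where $S_z$ factors through the projection to $MSp_4(\mathbb A_{\mathbb Q,f})$'' is exactly the unproved step, and as written the reduction is circular.

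The paper closes precisely this gap by quoting \cite[Proposition 4.6]{laca2007phase}: since the three actions \eqref{extended_1}--\eqref{extented_2} commute, $GSp_{4}(\hZ)$ is profinite, $\mathbb R$ is connected, and -- this is the second input your sketch names but never actually uses -- $L^{\infty}(X,\mu_\beta)^{G}=\mathbb C$ by \cite[Theorem 3.16]{abouamal2022bost}, the triviality of the two fixed-point algebras $L^{\infty}(X,\mu_\beta)^{G}$ and $L^{\infty}\big(\mathbb R_+\times X/GSp_{4}(\hZ)\big)^{G}$ forces $L^{\infty}(\mathbb R_+\times X)^{G}=\mathbb C$. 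Your own $S^1\times S^1$ counterexample actually points at why these structural hypotheses are the crux: there the compact group is connected rather than profinite, and the group is not ergodic on the analogue of $X$. Profiniteness is what allows a $G$-invariant function to be decomposed into $K$-isotypic pieces invariant under an open subgroup, which the connected scaling flow and the two ergodicity statements then eliminate. To repair Stage 2 you should either invoke that proposition, as the paper does, or reproduce its argument; the cocycle/Mackey-range reduction you outline is not carried out and cannot be, in the form stated.
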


\begin{proof}
The assumption together with \cite[Theorem 3.16]{abouamal2022bost} and the characterization of type $\textmd{III}_1$ action in terms of the extended action of $G$ given in \eqref{extended_1} (See \cite{neshveyev2011neumann})
) we obtain that:
 $$ L^{\infty} (X, \mu_\beta ) ^{G} = \mathbb C, \quad L^{\infty} (\mathbb R_+ \times X/ GSp_{4}(\hZ), \lambda_\infty \times  \mu_\beta ) ^{G} = \mathbb C.$$

 The result follows from \cite[Propositon 4.6]{laca2007phase} since the actions of $G$, $\mathbb R$ and $GSp_{4}(\hZ)$ on the space $(\mathbb R_+ \times X, \lambda_\infty \times \mu_\beta)$ commute, $GSp_{4}(\hZ)$ is profinite and $\mathbb R$ is connected.
\end{proof}

We are now ready to prove the main result of this paper.\\

\begin{theorem} \label{main_theorem}
For $ 3 <\beta \leq 4$, the unique $\textmd{KMS}_\beta$ state on the $GSp_{4}$-system is of type $\textmd{III}_1$.
\end{theorem}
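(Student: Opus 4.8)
The plan is to reduce the statement to a verification that the ratio set of the $G$-action on $(X/GSp_4(\hZ),\nu_\beta)$ is all of $\R_+^*$, and then combine this with the ergodicity of the $G$-action on $MSp_4(\Af)$ already available from \cite[Theorem 3.13]{abouamal2022bost}. Concretely, by the isomorphism \eqref{label 110} the factor $\pi_{\phi_\beta}(\mathcal{A})''$ is the reduction of $L^\infty(X,\mu_\beta)\rtimes G$ by $\mathbb{1}_F$, so by Theorem \ref{label 111} it suffices to show that the $G$-action on $(X,\mu_\beta)$ is free, ergodic, and has ratio set equal to $\R_+^*$. Freeness and ergodicity follow from the structure of the system together with \cite[Theorem 3.13]{abouamal2022bost} and Proposition \ref{label 112}: indeed Proposition \ref{label 112} shows that the type $\textmd{III}_1$ property of the $G$-action on $(X/GSp_4(\hZ),\nu_\beta)$ forces ergodicity of the extended $G$-action on $(\R_+\times X,\lambda_\infty\times\mu_\beta)$, which by the Flow-of-Weights characterization (see \cite{neshveyev2011neumann}) is exactly the assertion that the $G$-action on $(X,\mu_\beta)$ is of type $\textmd{III}_1$.

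Thus the heart of the proof is to establish that the $G$-action on $(X/GSp_4(\hZ),\nu_\beta)$ is of type $\textmd{III}_1$, i.e. that $r(\mathcal{R},\nu_\beta)\cap(0,\infty)=\R_+^*$. Since this set is a closed subgroup of $\R_+^*$, it is enough to show that every $\omega>1$ lies in the ratio set. This is precisely what Lemma \ref{label 85} is designed to deliver: given $\omega>1$, one fixes a finite set of primes $F$ and, using Lemma \ref{label 107}, approximates the normalized indicator $\phi=\nu_\beta^{-1}(\Gamma_2\backslash B)\,\mathbb{1}_{\Gamma_2\backslash B}$ of an arbitrary positive-measure $\Gamma_2$-invariant set $B$ by a function $f_F$ coming from a continuous right $GSp_4(\mathbb Z_F)$-invariant function $f$ on $\Gamma_2\backslash X_F$ with total integral $1$. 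Lemma \ref{label 85} then produces sequences of primes $\{p_n\},\{q_n\}$ in $F^c$ with $q_n^\beta/p_n^\beta\to\omega$, together with $\Gamma_2$-invariant sets $X_{1n},X_{2n},Y_{1n},Y_{2n}$, such that the inner products of the normalized images of $f_F$ under the operators $m(X_{1n})T_{g_{1,p_n}}m(Y_{1n})$ and $m(X_{2n})T_{g_{1,q_n}}m(Y_{2n})$ sum to at least $\kappa=\omega^{(3-\beta)/2\beta}/(1+\omega^{(3-\beta)/\beta})>0$, while the sets $Y_{1n},Y_{2n}$ are mutually disjoint. The mutual disjointness forces the corresponding vectors to be asymptotically orthogonal, so a positive lower bound on the sum of inner products can only hold if infinitely many of the pairs are "correlated"; translating this correlation back into the measure space via the group elements $g_{1,p_n}^{-1}g_{1,q_n}$ (whose Radon--Nikodym cocycle at the relevant points is comparable to $q_n^\beta/p_n^\beta\to\omega$) yields, for any $\epsilon>0$, an element $g\in G$ and a positive-measure subset of $gB\cap B$ on which $|dg_*\nu_\beta/d\nu_\beta-\omega|<\epsilon$. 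Hence $\omega\in r(\mathcal{R},\nu_\beta)$, and since $\omega>1$ was arbitrary the ratio set is all of $\R_+^*$.

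The main obstacle will be the last passage: making rigorous the step from "the sum of normalized inner products is bounded below while the $Y$-sets are disjoint" to "the ratio $\omega$ is actually attained in the ratio-set sense on a positive-measure piece of $gB\cap B$." The point is that disjointness of the $Y_{2n}$ (and of the $Y_{1n}$) means that for a fixed unit vector $\psi$ the quantities $\|m(Y_{in})\psi\|^2$ are summable, hence $\to 0$; if the operators $m(X_{in})T_{\cdot}m(Y_{in})$ acted like a Bessel family one would get summable inner products and no contradiction, but the normalization in the statement and the norm bound of Lemma \ref{label 86*} are tuned precisely so that the divergence $\sum_n 1/p_n^{\beta-3}=\infty$ from Lemma \ref{label 86} competes against this decay; the bookkeeping that shows the net effect is a genuine correlation of $f_F$ with its translates — and that this correlation survives the approximation $f_F\to\phi$ uniformly in $F$ — is the delicate part. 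One must also check freeness of the $G$-action on $(X,\mu_\beta)$ carefully (using that stabilizers are trivial on the relevant full-measure set, as the isotropy groups of the groupoid are trivial away from a null set), since Theorem \ref{label 111} requires it. Once these points are settled, combining Lemma \ref{label 85}, Lemma \ref{label 107}, Proposition \ref{label 112}, Theorem \ref{label 111} and \cite[Theorem 3.13]{abouamal2022bost} gives the type $\textmd{III}_1$ conclusion.
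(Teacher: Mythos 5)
Your reduction follows the paper's route exactly (the isomorphism \eqref{label 110}, Theorem \ref{label 111}, Proposition \ref{label 112}, then showing every $\omega>1$ lies in the ratio set of the action on $X/GSp_{4}(\hZ)$ using Lemmas \ref{label 107} and \ref{label 85}), but the step you yourself single out as ``the delicate part'' --- passing from the lower bound on $\sum_n(T_n^{(1)}f_F,T_n^{(2)}f_F)$ to an actual group element realizing $\omega$ on a positive-measure piece of $\tilde g B\cap B$ --- is left unproved, and the mechanism you sketch for it is not the correct one. There is no contradiction argument and no competition, at this stage, between the divergence $\sum_n p_n^{3-\beta}=\infty$ and the decay of $\norm{m(Y_{in})\psi}_2$: that divergence is consumed entirely inside Lemma \ref{label 85} (via $\sum_n\nu_\beta(\Gamma_2\backslash B_{p_n}\cup B_{q_n})\prod_{k<n}(1-\nu_\beta(\Gamma_2\backslash B_{p_k}\cup B_{q_k}))=1$) to produce the constant $\kappa$. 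Nor does one need ``infinitely many correlated pairs'': a single positive term suffices.

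The missing argument runs as follows. The mutual disjointness of the $Y_{1n}$ (resp. $Y_{2n}$) gives the Bessel bound $\sum_n\norm{m(Y_{in})\psi}_2^2\le\norm{\psi}_2^2$, and since $\norm{T_n^{(i)}}=1$, Cauchy--Schwarz yields
\begin{equation*}
\sum_n\big(T_n^{(1)}\phi,T_n^{(2)}\phi\big)\ \ge\ \sum_n\big(T_n^{(1)}f_F,T_n^{(2)}f_F\big)-\norm{f_F-\phi}_2\big(\norm{f_F}_2+\norm{\phi}_2\big),
\end{equation*}
so choosing $f$ and $F$ by Lemma \ref{label 107} with error term $<\kappa$, and invoking Lemma \ref{label 85} for the first sum, forces $(T_m^{(1)}\phi,T_m^{(2)}\phi)>0$ for some single index $m$. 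Here $\phi$ is the normalized indicator of $\Gamma_2\backslash\Gamma_2 B_0$ with $B_0=g_0B\cap(PGSp_4^+(\R)\times MSp_4(\hZ))$ (the translation by some $g_0\in G$ to obtain a positive-measure intersection with $Y$ is part of the bookkeeping you omit), so positivity of $(T_{g_m}\phi,T_{h_m}\phi)$ says precisely that $\Gamma_2 g_m^{-1}\Gamma_2 B_0\cap\Gamma_2 h_m^{-1}\Gamma_2 B_0$ has positive measure; hence there are $g\in\Gamma_2 g_m\Gamma_2$ and $h\in\Gamma_2 h_m\Gamma_2$ with $g^{-1}B_0\cap h^{-1}B_0$ of positive measure, and the element $\tilde g=g_0^{-1}hg^{-1}g_0$ --- not $g_{1,p_m}^{-1}g_{1,q_m}$ itself, as you write; the conjugation by $g_0$ and the passage to double-coset representatives are needed to land back in $B$ --- satisfies $d\tilde g_*\mu_\beta/d\mu_\beta=\lambda(\tilde g)^\beta=q_m^\beta/p_m^\beta$, which is within $\epsilon$ of $\omega$ on $\tilde gB\cap B$ by the choice of the prime sequences. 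Without this chain your proposal reduces the theorem to an acknowledged but unverified claim. (Your side worry about freeness is harmless: the isotropy is trivial off a null set, as in the low-temperature section, and this is all Theorem \ref{label 111} requires.)
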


\begin{proof}
In view of the isomorphism in \eqref{label 110} and Theorem \ref{label 111}, we need to show that the action of $G$ on $( X, \mu_\beta)$ is of type $\textmd{III}_1$. This is the case if and only if the action of $G$ on $(\mathbb R_+ \times X, \lambda_\infty \times \mu)$ is ergodic. Hence by Proposition \ref{label 112} it is enough to show that the action of $GSp_{4}^+(\mathbb{Q})$ on the space 
$(PGSp_{4}^+(\mathbb R) \times MSp_4(\mathbb{A}_f) / GSp_{4}(\hZ),\nu_\beta)$ is of type $\textmd{III}_1$. Since $ r(\mathcal R,\mu)^*$ is a closed subgroup of $\mathbb R_{+}^*$, it is enough to show that any real number $\omega> 1$ belongs to the ratio set $r(\mathcal{R},\mu_\beta)$ corresponding to this action. Fix $\epsilon > 0$ and let $B$ be any measurable right $GSp_{4}(\hZ)$-invariant subset of $X$ with positive measure. 

Let $F$ be a finite non-empty set of primes, $f$ any positive continuous right $GSp_{4}(\mathbb Z_F)$-invariant function with compact support in $\Gamma_2 \backslash (PGSp_{4}^+(\mathbb R) \times MSp_4{(\mathbb Z _F)})$, $X_{1n},X_{2n}, Y_{1n},Y_{2n}$ any mutually disjoint $\Gamma_2$-invariant measurable subsets of $X$ and $\{p_n\}_{n\geq 1},\{q_n\}_{n\geq 1}$ any two sequences of distinct primes in $F^c$. To ease notation we set

$$ T_n^{(1)}= \frac{m(X_{1n})T_{g_{n}} m(Y_{1n})}{ \norm{m(X_{1n})T_{g_{n}} m(Y_{1n})}},\quad T_n^{(2)}= \frac{m(X_{2n})T_{h_{n}} m(Y_{2n})}{\norm{m(X_{2n})T_{h_{n}} m(Y_{2n})}},$$

$$ e_n^{(1)}:=m(Y_{1n}),\quad e_n^{(2)}:=m(Y_{2n}).$$

Let $\phi \in L^2(\Gamma_2 \backslash X, d\nu_\beta)$. Since $\norm{T_n^{(1)}}=\norm{T_n^{(2)}}=1$ and $e_n',e_n''$ are projections, we obtain by Cauchy-Schwartz that

\begin{align*}
    \sum_{n} (T_n^{(1)}\phi,T_n^{(2)}\phi) 
    &\geq \sum_{n} (T_n^{(1)} f_F,T_n^{(2)} f_F) - \norm{e_n^{(1)}(f_F -\phi) }_2 \norm{e^{(2)}_nf_F}_2 -
    \norm{e_n^{(2)}(f_F -\phi) }_2 \norm{e^{(1)}_n\phi}_2 \\
    & \geq  \sum_{n} (T_n^{(1)} f_F,T_n^{(2)} f_F) - \Big( \sum_{n} \norm{e^{(1)}_n(f_F - \phi)}_2^2\Big)^{1/2} \Big(\sum_{n}  \norm{e_n^{(2)} f_F}_2^2 \Big)^{1/2} \\
    &- \Big( \sum_{n}  \norm{e^{(2)}_n(f_F - \phi)}_2^2 \Big)^{1/2}
    \Big( \sum_{n}  \norm{e_n^{(1)} (\phi)}_2^2  \Big)^{1/2}\\
    & \geq  \sum_{n} (T_n^{(1)} f_F,T_n^{(2)} f_F) - \norm{f_F -\phi}_2 (\norm{f_F}_2+ \norm{\phi}_2 ).
\end{align*}

Since the subset $GSp_{4}^+(\mathbb Q)B$ is completely determined by its intersection with $PGSp_{4}^+(\mathbb R) \times MSp_{4}(\hZ)$, there exists $g_0$ such that the intersection $B_0:= g_0B \cap ( PGSp_{4}^+(\mathbb R) \times MSp_{4}(\hZ))$ has positive measure. We set $$\phi:= \nu_\beta(\Gamma_2 \backslash \Gamma_2 B_0) \mathbb{1}_{\Gamma_2 \backslash \Gamma_2 B_0}.$$

Let $\kappa = \frac{\omega^{(3-\beta) / 2\beta}}{1+\omega^{(3-\beta)/\beta}}$. By Lemma \ref{label 107} there exists $f$ and $F \subset \mathcal{P}$ large enough such that 
$$\norm{f_F -\phi}_2 (\norm{f_F}_2+ \norm{\phi}_2 ) <\kappa, \quad \int_{\Gamma_2 \backslash X_F} f d\nu_{F,\beta} = 1.$$

Hence by Lemma \ref{label 85} there exists $m\in \mathbb N$ such that $(T_{m}^{(1)}\phi,T_{m}^{(2)}\phi)>0$. This implies that $(T_{g_m}\phi,T_{h_m}\phi)>0$, in particular this shows that the subset $\Gamma_2 g_m^{-1} \Gamma_2 B_0 \cap \Gamma_2 h_m^{-1} \Gamma_2 B_0 \subset X$ has positive measure.
\begin{comment}
    Last assertion follows from the identity 
    $$ \sum_{h\in \Gamma_2 \backslash \Gamma_2 g_n \Gamma_2} 1_{h^{-1} \Gamma_2 B_0} (x)= 
    1_{\Gamma_2 g_n^{-1} \Gamma_2 B_0}(x)$$
\end{comment}
Thus there exist $g\in \Gamma_2 g_m \Gamma_2$ and $h \in \Gamma_2 h_m \Gamma_2$ such that $g^{-1}B_0 \cap h^{-1} B_0$ has positive measure, which implies that the set $g_0^{-1}h g^{-1}g_0B \cap B$ has positive measure. If we set $\tilde{g}:=g_0^{-1}h g^{-1}g_0$, we get by the scaling condition that

\begin{equation*}
    \abs{\frac{ \d \tilde{g}_* \mu_\beta }{\d \mu_\beta} (x) -\omega} =   \abs{\lambda(g_0^{-1}h g^{-1}g_0)^\beta -\omega} = \abs{\frac{q_m^\beta }{p_m^\beta }-\omega} < \epsilon,\quad  \forall x\in \tilde{g}B\cap B. 
\end{equation*}

This shows that $\omega\in r(\mathcal{R},\mu_\beta)$, which completes the proof.
\end{proof}

We conclude this paper by the following Theorem. It summarizes the full thermodynamics of the Connes-Marcolli $GSp_4$-system.

    \begin{theorem}   
The $GSp_{4}$-system has the following properties:
\begin{enumerate}

    \item There is no $\textmd{KMS}_\beta$ state in the range $ 0 <\beta <3$ and $\beta \notin \{1,2\}$.
    \item There exists a unique $\textmd{KMS}_\beta$ state in the the range $3 <\beta \leq 4 $. Moreover, this state is of type $\textmd{III}_1$
    \item In the range $4<\beta \leq \infty $, the set of extremal
    states is identified with the Shimura variety $\textmd{Sh}(GSp_{4},\mathbb H_2^{\pm})$, $$\mathcal{E}_\beta \simeq GSp_{4}(\mathbb Q)\backslash \mathbb H_{2}^{\pm} \times GSp_{4}(\mathbb A_{\mathbb Q,f}).$$
    \noindent The explicit expression of the extremal $\textmd{KMS}_{\beta}$ states is given by 
    
    \begin{equation} 
    \phi_{\beta,y}(f)=\frac{\zeta(2\beta-2)\textmd{Tr} (\pi_{y}(f)e^{-\beta H_y})}{\zeta(\beta) \zeta(\beta-1)\zeta(\beta-2) \zeta(\beta-3)},\quad y \in  \mathbb H_2 ^+ \times GSp_{4}(\hZ) ,\quad \forall f\in \mathcal{A}.
    \end{equation}

 \noindent Every such a state is of type $\textmd{I}_\infty$
    
\end{enumerate}
\end{theorem}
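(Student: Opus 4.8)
The plan is to assemble the three items from results that are already in hand: parts (1) and the structural content of (3) are the phase-transition classification of \cite{abouamal2022bost}, the type assertion in (2) is Theorem~\ref{main_theorem} proved above, and the type assertion in (3) is the Theorem of Section~\ref{low_temperature}. Accordingly I would organize the write-up into three short blocks, one per item, each amounting to invoking the appropriate earlier result with its normalization matched to the one used here.

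For (1), I would quote the classification of $\textmd{KMS}_\beta$ states of $(\mathcal A,\sigma_t)$ established in \cite{abouamal2022bost}. The mechanism is that a $\textmd{KMS}_\beta$ state restricts to a measure on the diagonal subalgebra whose compatibility with the Hecke operators $T_{g_{1,p}}$ forces the convergence of a partition-function series attached to the double cosets $\Gamma_2 g_{1,p}\Gamma_2$, $p\in\mathcal P$; this series is comparable to $\zeta(\beta)\zeta(\beta-1)\zeta(\beta-2)\zeta(\beta-3)$, which diverges for $\beta\le 3$. This rules out $\textmd{KMS}_\beta$ states for $0<\beta<3$, the two excluded values $\beta\in\{1,2\}$ being exactly the poles of the first two shifted zeta factors, where this comparison degenerates and the finer discussion in \cite{abouamal2022bost} applies.

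For (2), the uniqueness of the $\textmd{KMS}_\beta$ state in the critical range $3<\beta\le4$ is \cite[Proposition 3.10]{abouamal2022bost}, where its $\Gamma_2$-invariant product measure $\mu_\beta$ is constructed explicitly; the statement that this state generates a type $\textmd{III}_1$ factor is precisely Theorem~\ref{main_theorem}. For (3), I would cite \cite{abouamal2022bost} for the parametrization $\mathcal E_\beta\simeq GSp_4(\mathbb Q)\backslash \mathbb H_2^{\pm}\times GSp_4(\mathbb A_{\mathbb Q,f})$ and for the Gibbs formula displayed in the statement (reading $\beta=\infty$ as the ground-state limit), and then invoke the Theorem of Section~\ref{low_temperature} to conclude that each such extremal $\phi_{\beta,y}$ is of type $\textmd{I}_\infty$. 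Concatenating the three blocks proves the theorem.

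Since every ingredient has already been established, there is no substantive obstacle in this final theorem; the only point demanding attention is the bookkeeping --- verifying that the endpoint $\beta=\infty$ of (3) is genuinely covered by the low-temperature analysis, and that the normalizing factor $\zeta(2\beta-2)/\bigl(\zeta(\beta)\zeta(\beta-1)\zeta(\beta-2)\zeta(\beta-3)\bigr)$ recorded here agrees with the one appearing in Section~\ref{low_temperature} and in \cite{abouamal2022bost}. If one insists on naming a genuinely hard step feeding into the statement, it is Theorem~\ref{main_theorem} itself, whose proof --- through the ratio-set computation of Lemma~\ref{label 85} together with Proposition~\ref{label 112} --- supplies the type $\textmd{III}_1$ content of (2).
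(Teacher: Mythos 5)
Your proposal is correct and matches the paper's intent exactly: this final theorem is a summary statement, assembled (as you do) from the classification and Gibbs-state results of \cite{abouamal2022bost} for items (1) and (3), Theorem \ref{main_theorem} for the type $\textmd{III}_1$ claim in (2), and the theorem of Section \ref{low_temperature} for the type $\textmd{I}_\infty$ claim in (3); the paper itself offers no further argument beyond this concatenation. Your bookkeeping remarks (the $\beta=\infty$ ground-state endpoint and matching the zeta normalization) are sensible but do not change the route.
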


\begin{comment}
\begin{figure}
\begin{center}

\begin{tikzpicture}
    % Draw the horizontal line
    \draw (0,0) -- (6,0);
    
    % Add numbers
    \node at (0,0.3) {0};
    \node at (3,0.3) {$\beta_{c_1}$};
    \node at (4,0.3) {$\beta_{c_2}$};
    \node at (6,0.3) {$+\infty$};

    % Draw accolades
    \draw[thick] (0,-0.05) -- (0,+0.05);
    \draw[thick] (3,-0.05) -- (3,+0.05);
    \draw[thick] (4,-0.05) -- (4,+0.05);

\draw [decorate, 
decoration = {calligraphic brace, raise = 2pt, amplitude = 4pt, mirror,  line width=4pt}] (0.1,0) --  (2.9,0);

\draw [decorate, 
decoration = {calligraphic brace, raise = 2pt, amplitude = 4pt, mirror}] (3.1,0) --  (3.9,0);

\draw [decorate, 
decoration = {calligraphic brace, raise = 2pt, amplitude = 4pt, mirror}] (4.1,0) --  (6,0);

\end{tikzpicture}
\end{center}

    \includegraphics{}
    \caption{Phase transition in the $GSp_4$ system}
    \label{fig:enter-label}
\end{figure}
    
\end{comment}

\begin{remark}

  The analysis of the $GSp_4$-system is closely related to the structure of the Hecke pair $(\Gamma_{2n}, GSp_{2n}^+(\mathbb Q))$, which is less explicit for $n \geq 2$. In our case $n = 2$, we were able to derive approximate formulas for $\textmd{deg}_{\Gamma_2}(g)$ given an arbitrary element $g\in GSp_{2n}^+(\mathbb Q)$. Moreover, in some  key Lemmas, we were able to carry on the analysis by using specific matrices so that a closed formula for $\textmd{deg}_{\Gamma_2}(g)$ can be used. This approach will not be possible in the general case $n > 2$. The author believes that it is still possible to extend the results of this paper and \cite{abouamal2022bost} to the general case $GSp_{2n}, n > 2$. More precisely, we conjecture that for $n > 2$, a phase transition occurs at $\beta = n (n + 1)/2$ and $\beta = 2n$ and that there are no $\textmd{KMS}_\beta$ states for $\beta < n (n + 1)/2$.
\end{remark}

\bibliographystyle{plain}
\bibliography{refs.bib}

\end{document}